\title{ }
\date{}
\author{}
\definecolor{mygreen}{rgb}{0,0.6,0}
\definecolor{mygray}{rgb}{0.5,0.5,0.5}
\definecolor{mymauve}{rgb}{0.58,0,0.82}
\numberwithin{equation}{section}
\theoremstyle{plain}
\newtheorem{thm}{Theorem}[section]
\newtheorem{obs}[thm]{Observation}
\newtheorem{cor}[thm]{Corollary}
\newtheorem{lem}[thm]{Lemma}
\theoremstyle{definition}
\newtheorem{defi}[thm]{Definition}
\title{RECONSTRUCTING TRIANGULATIONS OF $3$-MANIFOLDS FROM THEIR INTERSECTION MATRIX}
\date{}
\author{Jorge L. Arocha \\ email \href{mailto:arocha@matem.unam.mx}{arocha@matem.unam.mx} 
   \and Jorge Fernández-Hidalgo \\ email \href{mailto:jorgefernández@ciencias.unam.mx}{jorgefernandez@ciencias.unam.mx} }
\begin{document}

\maketitle

\abstract

The intersection matrix of a simplicial complex has entries equal to the rank of the intersection of its facets. In \cite{arocha} the authors prove the intersection matrix is enough to determine a triangulation of a surface up to isomorphism. In this work we show the intersection matrix is enough to determine the triangulation of a $3$-manifold up to isomorphism.

\section{Introduction}

\begin{defi}[simplicial complex]
	An abstract  simplicial complex is a pair $(V,\Delta)$ such that, $V$ is a finite set, $\Delta \subseteq 2^V$ and the following condition is satisfied, if  $X\in \Delta$ and $Y\subseteq X$  then $Y\in \Delta$. We call an element of $\Delta$ a simplex. If a simplex has $i$ elements we shall say it has rank $i$, dimension $i-1$ and call it an $i-1$-simplex. We denote the set of all $i$-simplices with $\Delta_i$. The maximal elements of $\Delta$ are called facets.
\end{defi}

\begin{defi}[isomorphism of simplicial complices]
	Given two simplicial complices $(V,\Delta)$ and $(V',\Delta')$ we say a map $\varphi: V\rightarrow V'$ is an isomorphism if $\varphi$ is bijective and $X\in \Delta$ if and only if $\varphi(X) \in \Delta'$.
\end{defi}

In this work we focus on simplicial complices with associated geometric realizations where the underlying space is a $3$-dimensional manifold (we call such objects triangulations of $3$-dimensional manifolds). For our purposes it is convenient to work with the underlying abstract simplicial complex and only use the geometric realization to give conditions the abstract simplicial complex must satisfy. One such condition is the following, given in terms of the neighbourhood of vertices.

\begin{defi}[neighbourhood of a vertex]
	Let $(V,\Delta)$ be a simplicial complex of dimension $3$ and let $v\in V$. We define the neighbourhood of $v$ as the simplicial complex of dimension $2$ spanned by the set of facets $\Delta'_{2} = \{ X - \{v\} | v \in X \in \Delta_3\}$.
\end{defi}

\begin{obs}
		\label{condvec}
		If $(V,\Delta)$ is a triangulation of  $3$-manifold the neighbourhood of $v$ must span a space homeomorphic to a sphere of dimension $2$.
\end{obs}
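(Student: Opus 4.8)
The plan is to show that the neighbourhood $L$ of $v$, whose facets are the triangles $X\setminus\{v\}$ with $v\in X\in\Delta_3$, is a $2$-sphere, by first identifying it as a homology sphere and then as a genuine surface (I assume throughout that the $3$-manifold $M$ is closed, since a boundary vertex would instead have a disc as its neighbourhood). The geometric object that controls everything is the \emph{closed star} of $v$, i.e. the realization of all simplices containing $v$: topologically it is the cone $v * L$ with apex $v$ and base $L$, and it is a closed neighbourhood of the point $v$ in $M$. I would extract the topology of $L$ from the local topology of $M$ at $v$.

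First I would run the standard local-homology computation. Since $M$ is a $3$-manifold, the local homology satisfies $H_k(M, M\setminus\{v\})\cong\mathbb{Z}$ for $k=3$ and vanishes otherwise. By excision this group is computed inside the closed star $C=v*L$, as $H_k(C, C\setminus\{v\})$. Because $C$ is contractible and $C\setminus\{v\}$ deformation retracts onto $L$, the long exact sequence of the pair gives $H_k(C, C\setminus\{v\})\cong\tilde H_{k-1}(L)$. Comparing the two, $\tilde H_2(L)\cong\mathbb{Z}$ and $\tilde H_k(L)=0$ for every other $k$; in particular $L$ is connected and has exactly the reduced homology of $S^2$.

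Next I would promote $L$ from a homology sphere to an honest closed surface, which requires two local conditions. Every edge $e$ of $L$ must lie in exactly two triangles of $L$: triangles of $L$ containing $e$ correspond to tetrahedra of $\Delta$ containing the triangle $\{v\}\cup e$, and in a $3$-manifold every $2$-simplex is a face of exactly two $3$-simplices. Moreover, for a vertex $w\in L$ the link of $w$ inside $L$ equals the link in $\Delta$ of the edge $\{v,w\}$, which for a $3$-manifold is a single circle, so each vertex of $L$ has a disc neighbourhood. Together these say that $L$ is a closed $2$-manifold.

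Finally I would invoke the classification of closed surfaces: the only connected closed surface with vanishing first homology is $S^2$, hence $L\cong S^2$, as claimed. I expect the middle step to be the main obstacle, since turning the homological data into the manifold property of $L$ relies on the facts that each triangle meets exactly two tetrahedra and that each edge link is a circle, and these are themselves low-dimensional manifold statements. To keep the argument self-contained one either checks them directly from the local Euclidean structure of $M$, or appeals to Moise's theorem that every triangulated $3$-manifold is combinatorial, which makes every vertex link a PL $2$-sphere at once.
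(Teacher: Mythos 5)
The paper gives no proof of this observation at all: it is stated as a known fact about triangulations of $3$-manifolds and then used as a hypothesis throughout (e.g.\ to rule out the non-orientable shells and to get the chain of adjacent tetrahedra around a vertex). Your argument is therefore not ``a different route from the paper's proof'' so much as the proof the paper omits, and it is the standard and correct one: excision plus the cone structure of the closed star gives $\tilde H_{k-1}(L)\cong H_k(M,M\setminus\{v\})$, so the link $L$ is a connected homology $2$-sphere; the two local conditions (each edge of $L$ in exactly two triangles, each vertex link in $L$ a circle) make $L$ a closed surface; and the classification of surfaces finishes, since $S^2$ is the only closed surface with trivial $H_1$ (note $\mathbb{RP}^2$ is killed by $H_1=\mathbb{Z}/2$, not by $H_2$). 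Two remarks. First, your closing worry about the middle step can be resolved without Moise: the same local-homology computation at an interior point of a $2$-simplex shows its link is a $0$-dimensional complex with $\tilde H_0\cong\mathbb{Z}$, hence exactly two points, and at an interior point of an edge it shows the edge link is a connected $1$-complex all of whose vertices have degree two (by the previous sentence), hence a single circle; so the whole observation is elementary and special to dimension $3$ --- which is worth saying, since for topological triangulations in dimension $\geq 5$ the analogous statement is false by the double suspension theorem. Second, your standing assumption that the manifold is closed is the right reading of the paper, which implicitly assumes every triangle lies in exactly two tetrahedra; it would be cleaner if the paper stated this explicitly, since for a boundary vertex the link is a disc and the observation fails as written.
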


The intersection matrix of a simplicial complex is the matrix such that entry $i,j$ is equal to the rank of the intersection of the $i$'th facet and the $j$'th facet.

We use the following definition of an intersection preserving map which captures the notion of two simplicial complices having the same intersection matrix.

\begin{defi}[intersection preserving map]
	Given two simplicial complices $(V, \Delta)$ and $(V',\Delta')$ of dimension $3$ with corresponding sets of facets $\Delta_{3}$ and $\Delta'_{3}$ we say a map $f:\Delta_{3}\rightarrow \Delta'_{3}$ is an intersection preserving map if $f$ is bijective and $|X\cap Y| = |f(X)\cap f(Y)|$ for all $X,Y\in \Delta_{3}$.

	We say $f$ extends to an isomorphism if there exists an isomorphism $\varphi: V \rightarrow V'$ such that $\varphi(X) = f(X)$ for all $X\in \Delta_{3}$.
\end{defi}

The main result we shall prove is the following,

\begin{thm}
	\label{main}
	Let $(V,\Delta)$ and $(V',\Delta')$ be two triangulations of $3$-dimensional manifolds and $f:\Delta_3\rightarrow \Delta'_3$ an intersection preserving map, then $f$ extends to an isomorphism.
\end{thm}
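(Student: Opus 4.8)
The plan is to reconstruct the vertex set and the full face poset of $\Delta$ from the purely numerical data of the intersection matrix, by first recovering the $2$-faces, then labelling the vertices of each facet, and finally gluing these local labellings into a single map $\varphi\colon V\to V'$ that induces $f$.

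First I would record what the matrix sees. Since facets are tetrahedra (rank $4$), for facets $X\ne Y$ the value $|X\cap Y|\in\{0,1,2,3\}$ distinguishes disjointness, a shared vertex, a shared edge, and a shared triangle; as $f$ preserves all these values, it preserves each of these four relations. By Observation \ref{condvec} the link of every vertex is a $2$-sphere, hence the link of every edge is a circle, and a standard consequence is that \emph{each $2$-face lies in exactly two facets}. Thus the $2$-faces of $\Delta$ are exactly the pairs $\{X,Y\}$ with $|X\cap Y|=3$, the face being $X\cap Y$, and $f$ carries $X\cap Y$ to $f(X)\cap f(Y)$, giving a bijection between the $2$-faces of $\Delta$ and those of $\Delta'$.

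Next I would build local vertex labellings. Because each of the four triangular faces of a facet $X$ is shared with exactly one other facet, $X$ has exactly four \emph{face-neighbours} $Y$ (those with $|X\cap Y|=3$), and they are distinct; moreover $X\setminus Y$ is a single vertex, so the four face-neighbours are in bijection with the four vertices of $X$ via ``the vertex opposite the shared face''. Since $f$ preserves rank-$3$ adjacency, the face-neighbours of $X$ map to those of $f(X)$, and I can define a bijection $\varphi_X\colon X\to f(X)$ by sending the vertex of $X$ opposite a neighbour $Y$ to the unique vertex of $f(X)$ lying outside $f(Y)$. A short check shows that if $X,X'$ share the face $T$, then both $\varphi_X$ and $\varphi_{X'}$ map $T$ onto the common triangle $f(X)\cap f(X')$, so the only question is whether they \emph{agree as bijections} on $T$.

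The decisive step, and the one I expect to be the main obstacle, is proving this local agreement and upgrading it to a global map. For a fixed vertex $v$, the facets containing $v$ are the triangles of its link $L_v$, a triangulated $2$-sphere, and ``being face-adjacent along a face through $v$'' corresponds to edge-adjacency in $L_v$; since $L_v$ is connected, it suffices to show $\varphi_X(v)=\varphi_{X'}(v)$ whenever $X,X'$ share a face $T\ni v$. This cannot follow from the pairwise ranks alone: the content is genuinely topological, and I would establish it by transporting the single label $v$ around the circular link of an edge contained in $T$, using that $f$ preserves rank-$2$ intersections to keep the transport coherent and the simple-cycle structure of the edge link to forbid any nontrivial monodromy. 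Granting this, $\varphi(v):=\varphi_X(v)$ is well defined; the same construction applied to $f^{-1}$ produces a two-sided inverse, so $\varphi\colon V\to V'$ is a bijection. Finally, since $\varphi$ restricts on each facet $X$ to the bijection $\varphi_X$ with image $f(X)\in\Delta'_3$, and every simplex lies in a facet, $\varphi$ carries $\Delta$ isomorphically onto $\Delta'$ with $\varphi(X)=f(X)$; that is, $f$ extends to an isomorphism.
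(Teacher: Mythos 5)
Your outline follows essentially the same architecture as the paper's proof: recover the $2$-faces as the intersections of facet pairs with $|X\cap Y|=3$, build a local vertex labelling on each facet, glue these by checking agreement across shared triangles, and obtain bijectivity by running the construction for $f^{-1}$. The first stages are fine, and the reduction of the gluing problem to agreement across a single shared face, via connectivity of the vertex link, is correct. But the step you yourself flag as decisive --- ``transporting the label around the circular link of an edge \ldots{} to forbid any nontrivial monodromy'' --- is not an argument, and it is exactly where the whole difficulty of the theorem sits. The facets containing a fixed edge form a complex of type $3CW_k$, and their images under $f$ form a set of facets with the same intersection matrix; the problem is that this matrix does \emph{not} determine the complex up to isomorphism. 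Section~3 of the paper classifies the complexes realizing it (the $3$-cyclic shells) and finds, besides $3CW_k$, the exceptional complexes $3CE_5$, $3CE_6$, $CS_6$ and $CF_8$. For example, $CS_6$ has the same intersection matrix as $3CW_6$ but its six facets share no common edge; if the image of an edge star were such a configuration, your transport would have no common edge to transport along and the labellings $\varphi_X$ would genuinely disagree. Coherence of the rank-$2$ and rank-$3$ data cannot rule this out, since these counterexample complexes are rank-consistent by construction.

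Closing the gap requires two ingredients your proposal does not supply: the combinatorial classification of all $3$-cyclic shells, and a topological argument showing the exceptional ones cannot arise as images of edge stars in a manifold triangulation. The paper excludes $3CE_5$ and $3CE_6$ because they are lifts of non-orientable $2$-complexes, which cannot sit inside the $2$-sphere link of the lifting vertex (Observation~\ref{condvec}); it excludes $CS_6$ and $CF_8$ by showing that the triangles forced into the link of a vertex would have to triangulate a disk with prescribed boundary while containing crossing diagonals, a contradiction. This is the content of Theorem~\ref{conditions}, and without it (or a substitute for it) your map $\varphi$ is not well defined.
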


Section $2$ proves some results about extensions of intersection preserving maps, and provides sufficient conditions for such an extension to exist. Sections $3$ and $4$ show these conditions hold in our case. Section $3$ is dedicated to introducing and classifyng $3$-cyclic shells, and in section $4$ we use this classification to obtain the desired result.

\section{ Extending intersection preserving maps }

Let us first introduce a "reasonable" way of extending a map between facets to all simplices.

\begin{defi}
	Let $(V,\Delta)$ and $(V',\Delta')$ be simplicial complices of dimension $3$ and let $f:\Delta_{3} \rightarrow \Delta'_{3}$ be any map. We define $F:\Delta \rightarrow \Delta'$ via 

	\begin{equation*}
		F(X) = \smashoperator[r]{\bigcap \limits_{Y \in \Delta_{3} | X\subseteq Y} } f(Y).
	\end{equation*}

\end{defi}

\begin{obs}
	Let $(V,\Delta)$ be a triangulation of a $3$-manifold, then for all $v\in V$ the intersection of all facets containing $v$ is $\{v\}$.
\end{obs}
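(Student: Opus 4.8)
The plan is to establish the trivial inclusion $\{v\} \subseteq \bigcap_{Y\in\Delta_3,\, v\in Y} Y$ and then rule out any further vertex by arguing by contradiction inside the neighbourhood of $v$. Since $v$ lies in every facet $Y$ containing $v$, the point $\{v\}$ is certainly contained in the intersection; the content of the statement is that nothing else is. So suppose, for contradiction, that some vertex $w \neq v$ also belongs to every facet $Y$ with $v \in Y$. Each such facet has four vertices, so $w \in Y - \{v\}$ for every facet $Y \ni v$. By the definition of the neighbourhood of $v$ as the $2$-dimensional complex spanned by the triangles $\{Y - \{v\} \mid v \in Y \in \Delta_3\}$, this says precisely that $w$ is a vertex lying in \emph{every} triangle of the neighbourhood of $v$.

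Next I would invoke Observation~\ref{condvec}, which tells us that the neighbourhood of $v$ is a triangulation of the $2$-sphere. The problem is thereby reduced to the following purely surface-theoretic statement: in a triangulation of $S^2$ no single vertex can belong to all of its facets. To derive a contradiction I would use a counting argument based on the fact that, in a closed triangulated surface, every edge lies in exactly two triangles and the link of each vertex is a single cycle. If every triangle contained $w$, then every triangle would have the form $\{w,a,b\}$, forcing every other vertex and every edge not incident to $w$ to appear in the link cycle of $w$. Writing $d$ for the length of this cycle, one gets $V = d+1$ vertices, $E = 2d$ edges and $F = d$ triangles, so that $V - E + F = 1$. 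This contradicts $\chi(S^2) = 2$. Equivalently, one may note that under this hypothesis the closed star of $w$ would exhaust the whole surface, whereas a closed star in a triangulated surface is a disk, which is not homeomorphic to a sphere.

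The step requiring the most care is the passage to the surface and the exploitation of its manifold structure: the counts $E = 2d$ and $F = d$, together with the identification of the link of $w$ as one cycle of length $d$, all rely on the neighbourhood being a genuine closed triangulated surface, so that each edge meets exactly two triangles and each vertex link is a circle. This is exactly the hypothesis supplied by Observation~\ref{condvec}, and once it is in place the Euler-characteristic computation closes the argument.
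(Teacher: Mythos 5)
Your argument is correct, but it takes a genuinely different and more global route than the paper's. The paper never passes to the link of $v$: it first replaces the intersection of all tetrahedra containing $v$ by the intersection of all triangles containing $v$ (each such triangle is the intersection of the two tetrahedra sharing it, and both of those contain $v$), and then notes that already the three triangles through $v$ inside a single tetrahedron intersect in exactly $\{v\}$. That is a two-line local computation whose only input from the manifold hypothesis is that every triangle lies in exactly two tetrahedra. You instead reduce to the assertion that no vertex of a triangulated $2$-sphere can lie in every triangle, and dispose of it with the Euler-characteristic count $V-E+F=(d+1)-2d+d=1\neq 2$ (or the remark that the closed star of $w$ is a disk, not a sphere). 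This leans on Observation~\ref{condvec} together with standard facts about closed triangulated surfaces --- each edge in exactly two triangles, each vertex link a single circle --- all of which hold here, so the proof goes through. What your route buys is a topological explanation of the phenomenon (the obstruction is $\chi(S^2)=2$) and some robustness: it would work whenever the link is any closed surface of Euler characteristic different from $1$. What the paper's route buys is economy: one tetrahedron and its faces suffice, with no Euler characteristic and no analysis of the link at all.
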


\begin{proof}
	Let $v\in V$, one has
	
	\begin{equation*}
		\smashoperator[lr]{\bigcap\limits_{X\in \Delta_{3} | v\in X} } X = \smashoperator[r]{\bigcap\limits_{X\in \Delta_{2} | v\in X}} X
	\end{equation*}

	This is because every triangle is contained in exactly two tetrahedra(since the simplicial complex is a triangulation), so every intersacand of the RHS is the intersection of two intersecands of the LHS.

	We can see

	\begin{equation*}
		\smashoperator[r]{\bigcap\limits_{X\in \Delta_{2} | v\in X}} X = \{v\}
	\end{equation*}

	by only considering triangles contained in one tetrahedra $X$.

\end{proof}

The following lemma shows $F$ can be the only extension of an intersection preserving map between $3$-manifolds.

\begin{lem}
	Let $(V,\Delta)$ and $(V',\Delta')$ be triangulations of $3$-manifolds and let $f:\Delta_{3} \rightarrow \Delta'_{3}$ be a bijection between facets which is extended by the isomorphism $\varphi: V \rightarrow V'$. Then for each $v \in V$ the set $F(\{v\})$ is equal to $\{\varphi(v)\}$.

\end{lem}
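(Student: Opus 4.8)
The plan is to unwind the definition of $F$ directly and reduce everything to the preceding observation. Starting from
\begin{equation*}
	F(\{v\}) = \smashoperator[r]{\bigcap\limits_{Y\in\Delta_3\,|\,v\in Y}} f(Y),
\end{equation*}
the first step is to use the hypothesis that $\varphi$ extends $f$, which by definition means $f(Y)=\varphi(Y)$ for every facet $Y\in\Delta_3$. Substituting this gives $F(\{v\})=\bigcap_{Y\ni v}\varphi(Y)$, so the whole intersection is now expressed purely in terms of the isomorphism $\varphi$ applied to facets.

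The second step is to pull $\varphi$ outside the intersection. Since $\varphi:V\to V'$ is a bijection, it is in particular injective, and injective maps commute with intersections of subsets of the domain; hence
\begin{equation*}
	\smashoperator[r]{\bigcap\limits_{Y\in\Delta_3\,|\,v\in Y}}\varphi(Y) = \varphi\!\left( \smashoperator[r]{\bigcap\limits_{Y\in\Delta_3\,|\,v\in Y}} Y \right).
\end{equation*}
The only mild point of care here is that the index set is the collection of facets of $\Delta$ containing $v$, which is the same set on both sides, so no re-indexing via $\varphi$ is needed: I am applying $\varphi$ to each summand and then to their intersection.

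The third and final step is to invoke the earlier observation that for a triangulation of a $3$-manifold the intersection of all facets containing a fixed vertex $v$ is exactly $\{v\}$. Combining this with the previous display yields $F(\{v\})=\varphi(\{v\})=\{\varphi(v)\}$, as claimed.

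I do not expect a genuine obstacle in this argument, since it is essentially a bookkeeping computation: the content is entirely supplied by the preceding observation on intersections of facets at a vertex, and the only structural fact used about $\varphi$ is its injectivity to justify commuting it past the intersection. The point worth stating carefully is precisely that commutation, and the fact that ``$\varphi$ extends $f$'' is being used in its defining form $f(Y)=\varphi(Y)$ on facets; once these are made explicit the equality $F(\{v\})=\{\varphi(v)\}$ follows immediately.
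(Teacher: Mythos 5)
Your proof is correct and follows essentially the same route as the paper: substitute $f(Y)=\varphi(Y)$, use injectivity of $\varphi$ to commute it past the intersection, and invoke the observation that the intersection of all facets containing $v$ is $\{v\}$. The paper's own proof is just a terser version of the same computation.
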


\begin{proof}
	Let $v\in V$. Since $f(X)= \varphi(X)$ for all $X\in \Delta_3$ we have,

	\begin{equation*}
		\smashoperator[r]{\bigcap\limits_{X\in \Delta_{3} | v \in X} }f(X) = \smashoperator[r]{\bigcap\limits_{X\in \Delta_{3} | v \in X} }\varphi(X).
	\end{equation*}

	Because $\varphi$ is bijective and $\smashoperator[r]{\bigcap\limits_{X\in \Delta_{3} | v \in X} } X = \{v\}$ we obtain,

	\begin{equation*}
		\smashoperator[r]{\bigcap\limits_{X\in \Delta_{3} | v \in X}} \varphi(X)  = \{\varphi(v)\}
	\end{equation*}

\end{proof}

We now show whenever this map and the inverse map can be defined, the resulting maps are isomorphisms.

\begin{thm}
	\label{primero}

	Let $(V,\Delta)$ and $(V',\Delta')$ be triangulations of $3$-manifolds and $f:\Delta_{3}\rightarrow \Delta'_{3}$ be a bijection between facets such that for all $v\in V$ and $v'\in V'$ the sets $F(\{v\})$ and $F^{-1}(\{v'\})$ are singletons. Then the map $\varphi:V\rightarrow V'$ defined by $\{ \varphi(v) \} = F(v)$ is an isomorphism and the map $\sigma:V'\rightarrow V$ defined by $\{\sigma(v')\} = F^{-1}(\{v'\})$ is its inverse.

\end{thm}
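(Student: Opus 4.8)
The plan is to prove that $\varphi$ and $\sigma$ are mutually inverse bijections and then that both send simplices to simplices; together these make $\varphi$ an isomorphism whose inverse is $\sigma$. Both maps are well-defined by hypothesis, since $F(\{v\})$ and $F^{-1}(\{v'\})$ are singletons.

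The first step is a containment observation: if $v \in Y$ for a facet $Y \in \Delta_3$, then $f(Y)$ appears as one of the intersecands defining $F(\{v\})$, so $\{\varphi(v)\} = F(\{v\}) \subseteq f(Y)$, that is, $\varphi(v) \in f(Y)$. Symmetrically, $v' \in Y'$ implies $\sigma(v') \in f^{-1}(Y')$.

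The heart of the argument is showing $\sigma \circ \varphi = \mathrm{id}_V$. Fix $v \in V$ and set $w' = \varphi(v)$. By the containment above, $w' \in f(Y)$ for every facet $Y \ni v$; hence each such $f(Y)$ is a facet of $\Delta'$ containing $w'$, and therefore $f^{-1}(f(Y)) = Y$ occurs among the intersecands defining $F^{-1}(\{w'\})$. Restricting that intersection to these terms only enlarges it, so
\[ \{\sigma(w')\} = F^{-1}(\{w'\}) = \bigcap_{Y' \ni w'} f^{-1}(Y') \subseteq \bigcap_{Y \ni v} f^{-1}(f(Y)) = \bigcap_{Y \ni v} Y = \{v\}, \]
where the last equality is the earlier observation that the facets through $v$ intersect in $\{v\}$. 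As $\{\sigma(w')\}$ is a singleton, this forces $\sigma(\varphi(v)) = v$. The same computation with the roles of the two complexes exchanged gives $\varphi \circ \sigma = \mathrm{id}_{V'}$, so $\varphi$ is a bijection with inverse $\sigma$. I expect this mutual-inverse step, in particular keeping careful track of exactly which facets contribute to each intersection, to be the main obstacle.

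It remains to check that $\varphi$ and $\sigma$ preserve simplices. For a facet $Y$, the containment step gives $\varphi(Y) \subseteq f(Y)$; since $\varphi$ is injective, $\varphi(Y)$ has four elements, and so does the facet $f(Y)$, whence $\varphi(Y) = f(Y)$ (and symmetrically $\sigma(Y') = f^{-1}(Y')$). Any $X \in \Delta$ lies in some facet $Y$, so $\varphi(X) \subseteq \varphi(Y) = f(Y) \in \Delta'$ and thus $\varphi(X) \in \Delta'$ by downward closure. The symmetric statement for $\sigma$, together with $\sigma = \varphi^{-1}$, yields the converse implication, completing the proof that $\varphi$ is an isomorphism.
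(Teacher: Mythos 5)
Your proposal is correct and follows essentially the same route as the paper's proof: the containment $\varphi(v)\in f(Y)$ for facets $Y\ni v$, then $\sigma(\varphi(v))\in\bigcap_{Y\ni v}Y=\{v\}$ to get mutual inverses, and finally the cardinality argument to conclude $\varphi(Y)=f(Y)$. Your write-up is somewhat more explicit than the paper's (spelling out the ``opposite direction'' step and the extension to lower-dimensional simplices via downward closure), but the underlying argument is identical.
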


\begin{proof}
	We first show if $v\in X \in \Delta_{3}$ then $\varphi(v) \in f(X)$. One can see this by noticing $ \{ \varphi\{v\} \} = \smashoperator[r]{\bigcap\limits_{Y\in \Delta_{3} | v \in Y}} f(Y)  \subseteq f(X)$

	One can apply the previous argument once more (but in the opposite direction) using the element $\varphi(v) \in f(X) \in \Delta'_{3}$ to obtain  $\sigma(\varphi(v)) \in X$ for all $v\in X \in \Delta_{3}$.

	This implies $\sigma(\varphi(v)) \in \smashoperator[r]{\bigcap\limits_{X\in \Delta_{3} | v \in X}} X = \{v\}$. Therefore $\varphi$ is bijective and $\sigma$ is its inverse.

	It only remains to be proved that $\varphi(X) = f(X)$ for all $X \in \Delta_3$. We already have $\varphi(X) \subseteq f(X)$ and the equality is derived from the fact that $\varphi$ is injective and $X,f(X)$ are two finite sets of the same size.

\end{proof}

The following lemma will be used to prove theorem \ref{main}. In section $4$ we show the conditions of the lemma are met.

\begin{lem}
	\label{segundo}
		Let $(V,\Delta)$ and $(V',\Delta')$ be triangulations of $3$-manifolds and let $f: \Delta_{3} \rightarrow \Delta'_{3}$ be an intersection preserving map such that $F$ induces a bijection between $\Delta_{i}$ and $\Delta'_{i}$ for $1 \leq i \leq 3$. Then the map $\varphi: V \rightarrow V'$ given by $\{\varphi(v)\} = F(v)$ is well defined.

\end{lem}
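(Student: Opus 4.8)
The plan is to deduce the statement directly from the single rank-$1$ instance of the hypothesis. First I would fix the bookkeeping: by the conventions of the introduction, the elements of $\Delta_{1}$ are exactly the $1$-element simplices, i.e.\ the singletons $\{v\}$, and since $(V,\Delta)$ triangulates a manifold every $v\in V$ lies in some tetrahedron, so downward closure gives $\{v\}\in\Delta_{1}$; thus $V$ is identified with $\Delta_{1}$, and likewise $V'$ with $\Delta'_{1}$. The map $\varphi$ is prescribed by $\{\varphi(v)\}=F(\{v\})$, so the only thing to verify is that for each $v$ the set $F(\{v\})$ is a one-element set; a priori it could be empty or a simplex of $\Delta'$ of rank exceeding $1$.

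Then I would invoke the hypothesis at $i=1$: $F$ restricts to a bijection from $\Delta_{1}$ onto $\Delta'_{1}$. In particular $F$ carries $\Delta_{1}$ into $\Delta'_{1}$, so for every $v\in V$ the simplex
\[
  F(\{v\}) = \bigcap_{Y\in\Delta_{3},\, v\in Y} f(Y)
\]
belongs to $\Delta'_{1}$. Since every element of $\Delta'_{1}$ is a singleton, $F(\{v\})=\{v'\}$ for a unique $v'\in V'$, and this $v'$ is the unambiguous value $\varphi(v)$. Letting $v$ range over all of $V$ defines $\varphi$ on the whole vertex set, which is precisely the claimed well-definedness.

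I do not expect a genuine obstacle inside this lemma: once the rank-preserving bijection is available, the argument is just the observation that a $1$-simplex has exactly one vertex. The real difficulty is external, namely establishing the hypothesis that $F$ maps $\Delta_{i}$ bijectively onto $\Delta'_{i}$ for each $i$, and that is exactly the task carried out in Section $4$ through the classification of $3$-cyclic shells. If I wished to make the present step more transparent I would also record that $F$ is monotone, so that $F(\{v\})\subseteq F(X)=f(X)$ for every facet $X\ni v$ and hence $F(\{v\})$ is confined inside each such image; but for well-definedness alone this is unnecessary, as the $i=1$ bijection already pins $F(\{v\})$ to a single vertex. The same reasoning applied to $f^{-1}$ would supply the companion fact needed to invoke Theorem~\ref{primero}.
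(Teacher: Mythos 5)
Your argument rests on a misreading of the paper's indexing convention, and that misreading makes the proof circular. The paper declares that a simplex with $i$ elements has dimension $i-1$ and is called an $(i-1)$-simplex, and that $\Delta_i$ denotes the set of all $i$-simplices; hence $\Delta_1$ is the set of \emph{edges} (two-element simplices), not of singletons. This is confirmed throughout: Lemma~\ref{single2} describes $\bigcap_{Y\in\Delta_1,\,v\in Y\subseteq X}F(Y)$ as ``the intersection of $2$ edges contained in the triangle $F(X)$'', and the closing lemma of Section~4 establishes the $\Delta_1$ bijection by analyzing the cyclic shell of facets around an edge. Consequently the hypothesis ``$F$ induces a bijection between $\Delta_i$ and $\Delta'_i$ for $1\le i\le 3$'' covers edges, triangles and tetrahedra only; it says nothing about the singletons $\Delta_0$. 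Your key step --- invoking the hypothesis at $i=1$ to conclude that $F(\{v\})$ lies in $\Delta'_1$ and is therefore a one-element set --- assumes exactly the statement to be proved. Were the vertex level included in the hypothesis, the lemma would indeed be a triviality, but then the entire proof the paper devotes to it would be pointless.

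The real content of the lemma, absent from your proposal, is the descent from the edge/triangle/tetrahedron bijections to the vertex level using the manifold structure. The paper first rewrites $F(\{v\})$ as $\bigcap_{X\in\Delta_2,\,v\in X}F(X)$ (Lemma~\ref{enemenosuno}, using that every triangle lies in exactly two tetrahedra); it then shows that for a fixed tetrahedron $X\ni v$ the partial intersection $\bigcap_{Y\in\Delta_2,\,v\in Y\subseteq X}F(Y)$ is a single vertex, since it is the intersection of three distinct triangles of the tetrahedron $f(X)$ (Lemma~\ref{single1}); and finally it uses Lemma~\ref{intermedio} together with the connectivity of the link of $v$ (a $2$-sphere, by Observation~\ref{condvec}) to show this single vertex is the same for every tetrahedron containing $v$, so the full intersection equals that one vertex. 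An argument of this kind is indispensable; the rank-one bijection you appeal to does not exist as a hypothesis, and obtaining it is precisely what the lemma accomplishes.
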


	This proof will be slightly long and is composed of various lemmas.

	\begin{lem}

		\label{enemenosuno}

		For all $v\in V$ we have  $\smashoperator[r]{\bigcap\limits_{X \in \Delta_{3} | v \in X} } f(X) = \smashoperator[r]{\bigcap \limits_{X\in \Delta_{2} | v \in X} } F(X)$. 
	
	\end{lem}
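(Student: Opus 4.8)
The plan is to establish the identity by unfolding the definition of $F$ on the right-hand side and recognizing the resulting double intersection as the left-hand side indexed redundantly. No topological input is really needed; everything reduces to keeping track of which facets occur in each intersection.

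First I would rewrite the right-hand side. For a triangle $T \in \Delta_2$ with $v \in T$, the definition of $F$ gives $F(T) = \bigcap_{Y \in \Delta_3,\, T \subseteq Y} f(Y)$, so that
\begin{equation*}
	\bigcap_{\substack{T \in \Delta_2 \\ v \in T}} F(T) = \bigcap_{\substack{T \in \Delta_2 \\ v \in T}} \bigcap_{\substack{Y \in \Delta_3 \\ T \subseteq Y}} f(Y).
\end{equation*}
The argument then rests entirely on identifying which facets $Y$ actually appear in this double intersection.

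The key step is the claim that the facets occurring on the right are exactly the facets containing $v$. If $v \in T \subseteq Y$ then $v \in Y$, so every facet that occurs contains $v$. Conversely, if $Y \in \Delta_3$ contains $v$, then since $Y$ has four vertices I may adjoin $v$ to two of the remaining three to obtain a $3$-element set $T$ with $v \in T \subseteq Y$; as $\Delta$ is closed under taking subsets, $T \in \Delta_2$, and hence $Y$ occurs. Because intersection is idempotent, repeated occurrences of a given $f(Y)$ do not change the value, so the double intersection collapses to $\bigcap_{Y \in \Delta_3,\, v \in Y} f(Y)$, which is the left-hand side.

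Equivalently one may argue by double inclusion: any element of the right-hand side lies in $f(Y)$ for every facet $Y$ through $v$ --- choosing a triangle $T$ with $v \in T \subseteq Y$ and using $F(T) \subseteq f(Y)$ --- while conversely any element of the left-hand side lies in each $F(T)$, since every facet containing $T$ contains $v$. I do not expect a genuine difficulty here; the only point requiring care is the bookkeeping of the index sets, namely checking that expanding $F(T)$ neither introduces a facet avoiding $v$ nor omits a facet through $v$. The manifold hypothesis is used only lightly, through the fact that faces of facets again lie in the complex, so the essential content is the elementary observation that every facet through $v$ has a triangular face through $v$.
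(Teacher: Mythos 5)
Your proof is correct and takes essentially the same route as the paper's: unfold the definition of $F$ on the right-hand side and verify by double inclusion that the facets appearing there are exactly the facets through $v$. The only (harmless) difference is that the paper's $\subseteq$ direction invokes the manifold fact that each triangle lies in exactly two tetrahedra, whereas your index-set bookkeeping needs only that every facet containing a triangle through $v$ itself contains $v$, so you actually use slightly less.
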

	
	\begin{proof}

		For the $\supseteq$ containment we notice if $X\in \Delta_{3}$ contains $v$ then there is $Y\in \Delta_{2}$ containing $v$,such that $Y\subseteq X$, it follows $f(Y) \subseteq F(X)$.

	For the $\subseteq$ containment we use \ref{segundo} to see every intersecand in the RHS is the intersection of two  elements of the LHS.

	\end{proof}

	\begin{lem}
		\label{single1}
		Let $v\in V$ and  $X \in \Delta_3$ with $v\in X$. The set $\smashoperator[r]{\bigcap\limits_{Y\in \Delta_{2} | v \in Y \subseteq X} } F(Y)$ contains exactly one element.
	\end{lem}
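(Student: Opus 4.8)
The plan is to force all of the triangles appearing in the intersection into a single tetrahedron of $\Delta'$, after which the claim becomes a statement about subsets of a $4$-element set. Writing $X = \{v,a,b,c\}$, the elements $Y \in \Delta_2$ with $v\in Y\subseteq X$ are exactly the three faces $\{v,a,b\}$, $\{v,a,c\}$, $\{v,b,c\}$, so the set in question is $F(\{v,a,b\})\cap F(\{v,a,c\})\cap F(\{v,b,c\})$.

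The key observation I would establish is that $F$ is inclusion preserving: if $Y\subseteq X$ then the index family $\{W\in\Delta_3 : X\subseteq W\}$ is contained in $\{W\in\Delta_3 : Y\subseteq W\}$, so that $F(Y)$, being an intersection over the larger family, satisfies $F(Y)\subseteq F(X)$. Since $X$ is itself a facet, the only facet containing it is $X$, whence $F(X)=f(X)$, which has exactly four elements. Therefore each of $F(\{v,a,b\})$, $F(\{v,a,c\})$, $F(\{v,b,c\})$ is a subset of the $4$-element set $f(X)$. Invoking the hypothesis of Lemma \ref{segundo} that $F$ restricts to a bijection between $\Delta_2$ and $\Delta'_2$, each of these images lies in $\Delta'_2$ and is therefore a $3$-element subset of $f(X)$; moreover, since $\{v,a,b\}$, $\{v,a,c\}$, $\{v,b,c\}$ are distinct and $F$ is injective on $\Delta_2$, the three images are pairwise distinct.

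To conclude, I would note that a $3$-element subset of the $4$-element set $f(X)$ is precisely the complement in $f(X)$ of a single vertex; three pairwise distinct such subsets are the complements of three distinct vertices, so their intersection is the complement of the union of those three vertices, namely the unique remaining element of $f(X)$. This gives exactly one element, as claimed. The argument presents no serious obstacle once the monotonicity $F(Y)\subseteq F(X)=f(X)$ is in place; the only point that requires care is the appeal to the bijection in Lemma \ref{segundo}, which is what guarantees that the three images are distinct full $3$-element faces rather than smaller or coincident sets, so that the final counting argument on subsets of a $4$-set actually applies.
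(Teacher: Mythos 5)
Your proof is correct and takes essentially the same route as the paper: both arguments rest on the monotonicity $F(Y)\subseteq F(X)=f(X)$ together with the hypothesis that $F$ is a bijection on $\Delta_2$, so that the set in question is the intersection of three distinct triangles of the tetrahedron $f(X)$, which is a single vertex. You have merely spelled out the final counting step (three distinct $3$-subsets of a $4$-set meet in exactly one point) that the paper leaves implicit.
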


	\begin{proof}
		This is the intersection of $3$ triangles contained in the tetrahedra $F(X)$. To see this is the case we note if $Y$ contains $X$ then $F(Y)$ is contained in $F(X)$, and because $F$ is a bijection between triangles we can be sure there are $3$ distinct intersecands.

	\end{proof}

	\begin{lem}
		\label{single2}
		Let $v\in V$ and $X\in \Delta_{2}$ with $v\in X$. The set $\smashoperator[r]{\bigcap\limits_{Y\in \Delta_{1} | v\in Y \subseteq X} }F(Y)$ contains exactly one element.
	\end{lem}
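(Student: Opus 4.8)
The plan is to mirror the argument of Lemma \ref{single1} one dimension lower. Fix $v \in V$ and a triangle $X \in \Delta_2$ with $v \in X$; write $X = \{v,a,b\}$. The edges $Y \in \Delta_1$ with $v \in Y \subseteq X$ are exactly the two edges $\{v,a\}$ and $\{v,b\}$, so the set in question is $F(\{v,a\}) \cap F(\{v,b\})$, and I must show this has exactly one element.

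First I would record the monotonicity of $F$: if $A \subseteq B$ then every facet containing $B$ also contains $A$, so the family of facets intersected to form $F(B)$ is a subfamily of the one forming $F(A)$, whence $F(A) \subseteq F(B)$. Applying this to $\{v,a\} \subseteq X$ and $\{v,b\} \subseteq X$ gives $F(\{v,a\}) \subseteq F(X)$ and $F(\{v,b\}) \subseteq F(X)$.

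Next I would invoke the bijection hypothesis of Lemma \ref{segundo}. Since $F$ restricts to a bijection $\Delta_2 \rightarrow \Delta'_2$, the set $F(X)$ is a triangle, i.e. $|F(X)| = 3$; since $F$ restricts to a bijection $\Delta_1 \rightarrow \Delta'_1$, each of $F(\{v,a\})$ and $F(\{v,b\})$ is an edge, i.e. a $2$-element set; and injectivity together with $\{v,a\} \neq \{v,b\}$ forces $F(\{v,a\}) \neq F(\{v,b\})$.

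Finally, two distinct $2$-element subsets of the $3$-element set $F(X)$ meet in exactly one element, so the intersection is a singleton. The only point requiring care is that the two images really are \emph{distinct} edges sitting \emph{inside the single triangle} $F(X)$; this is precisely where injectivity of $F$ on $\Delta_1$ and the fact that $F(X)$ has three elements are used, and beyond this bookkeeping I do not expect any genuine obstacle.
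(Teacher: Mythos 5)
Your proposal is correct and follows essentially the same argument as the paper: both use monotonicity of $F$ to place $F(\{v,a\})$ and $F(\{v,b\})$ inside the triangle $F(X)$, use the bijection on $\Delta_1$ to see they are two distinct edges, and conclude that two distinct edges of a triangle meet in exactly one vertex. Your write-up merely spells out the cardinality bookkeeping more explicitly than the paper does.
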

	\begin{proof}
		This is the intersection of $2$ edges contained in the triangle $F(X)$. To see this is the case we note if $Y$ contains $X$ then $F(Y)$ is contained in $F(X)$, and because $F$ is a bijection between edges we can be sure there are $2$ distinct intersecands.

	\end{proof}

	\begin{lem}
		\label{intermedio}
		Let $v\in V$, $X\in \Delta_{3}$ and $Y\in \Delta_{2}$ with $v\in Y \subseteq X$, then we have

		\begin{equation}
			\label{ecua}
			\smashoperator[r]{\bigcap\limits_{Z\in \Delta_{2} | v\in Z \subseteq X} }F(Z) = \smashoperator[r]{\bigcap\limits_{Z\in \Delta_{1} | v\in Z \subseteq Y} }F(Z)
		\end{equation}
	\end{lem}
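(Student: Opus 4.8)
The plan is to lean entirely on the two preceding singleton lemmas in order to reduce the claimed equality to a \emph{single} inclusion, and then to read off that inclusion from the monotonicity of $F$ together with a small combinatorial fact about the faces of a tetrahedron. Concretely, Lemma \ref{single1} says the left-hand side of \eqref{ecua} is a singleton and Lemma \ref{single2} says the right-hand side is a singleton, so once I establish that one is contained in the other, equality is automatic.

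First I would record the monotonicity property of $F$ that is already used implicitly in the proofs of Lemmas \ref{single1} and \ref{single2}: if $W \subseteq W'$ then $F(W) \subseteq F(W')$. This is immediate from the definition of $F$, since enlarging the simplex can only shrink the family of facets $\{Y \in \Delta_3 : W \subseteq Y\}$ over which we intersect, and intersecting over fewer sets gives a larger result.

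Next I would fix coordinates. Writing $Y = \{v,a,b\}$ and noting that $Y \subseteq X$ forces $X = \{v,a,b,c\}$ for a unique fourth vertex $c$, the $2$-simplices $Z$ with $v \in Z \subseteq X$ are exactly $\{v,a,b\}$, $\{v,a,c\}$, $\{v,b,c\}$, so the left-hand intersection has three intersecands, while the $1$-simplices $Z$ with $v \in Z \subseteq Y$ are exactly $\{v,a\}$ and $\{v,b\}$, so the right-hand intersection has two. The combinatorial observation is that every one of the three triangles contains at least one of the two edges: any such triangle has the form $\{v\} \cup \{x,y\}$ with $\{x,y\} \subseteq \{a,b,c\}$, and by pigeonhole at least one of $x,y$ lies in $\{a,b\}$, so the corresponding edge $\{v,x\}$ (or $\{v,y\}$) is one of the two edges contained in $Y$.

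I would then combine these: for each triangle $T$ on the left there is an edge $e$ on the right with $e \subseteq T$, whence $F(e) \subseteq F(T)$ by monotonicity, and therefore the right-hand intersection, being contained in $F(e)$, is contained in $F(T)$. Intersecting over all three triangles $T$ shows the right-hand side of \eqref{ecua} is contained in the left-hand side, and the two singleton lemmas upgrade this inclusion to an equality. The one step I would be careful to get right — and the only place a misstep is likely — is the direction of monotonicity (smaller simplices have smaller $F$-images), since it is tempting to try to prove the reverse inclusion $\mathrm{LHS} \subseteq \mathrm{RHS}$ directly from edge-in-triangle containments, and that version does not follow cleanly; the containment above is the one that the combinatorics actually delivers.
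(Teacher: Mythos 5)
Your proof is correct, but it establishes the opposite inclusion from the one the paper proves, by a genuinely different mechanism. The paper shows $\mathrm{LHS}\subseteq\mathrm{RHS}$ of \eqref{ecua}: it first proves that for each edge $Z$ with $v\in Z\subseteq Y$ one has $F(Z)=\bigcap_W F(W)$, the intersection running over the two triangles $W$ with $Z\subseteq W\subseteq X$ (this step needs the injectivity of $F$ on $\Delta_2$ to know the two triangle-images are distinct, so that their intersection is exactly an edge and hence coincides with $F(Z)$); consequently every intersecand on the right of \eqref{ecua} contains the left-hand side. You instead show $\mathrm{RHS}\subseteq\mathrm{LHS}$, using only the monotonicity of $F$ (smaller simplices have smaller images, immediate from the definition) together with the pigeonhole observation that each of the three triangles of $X$ through $v$ contains one of the two edges of $Y$ through $v$. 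Both arguments then finish identically, invoking Lemmas \ref{single1} and \ref{single2} to upgrade a one-sided inclusion between two singletons to an equality. Your inclusion step is slightly more economical, since it makes no appeal to the bijectivity of $F$ on triangles or edges (that hypothesis is of course still consumed by the two singleton lemmas, so nothing changes globally). Your closing caveat that the direction $\mathrm{LHS}\subseteq\mathrm{RHS}$ ``does not follow cleanly'' from edge-in-triangle containments is fair, but note that the paper does obtain exactly that direction --- by identifying each $F(e)$ as an exact intersection of triangle-images rather than by a bare containment.
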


	\begin{proof}

		We first show if $Z\in \Delta_{1}$ and $v\in Z \subseteq Y$ then,

		\begin{equation*}
			F(Z) = \smashoperator[r]{\bigcap\limits_{W\in \Delta_{2} | Z \subseteq W \subseteq X} } F(W)
		\end{equation*}

		To see this we first notice the $\subseteq$ relation holds, because $F(Z) \subseteq F(W)$ for both triangles $W$ in the intersection. To see equality holds we notice there are two distinct intersecands in the right hand side, so the intersection is an edge.

		We have thus shown every intersecand in the RHS of \ref{ecua} is the intersection of two intersecands of the LHS. It follows we have the $\subseteq$ relation in \ref{ecua}. Because both sets have exactly one element (by \ref{single1} and \ref{single2}) it follows they are equal.

	\end{proof}

	\begin{cor}
		If $X$ and $Y$ are two tetrahedra and $|X\cap Y|=3$ then
		\begin{equation*}
			\smashoperator[r]{\bigcap\limits_{Z\in \Delta_{2} | v \in Z \subseteq X}} F(Z) = \smashoperator[r]{\bigcap\limits_{Z\in \Delta_{2} | v \in Z \subseteq Y} } F(Z)
		\end{equation*}
	\end{cor}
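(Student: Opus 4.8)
The plan is to reduce the corollary to a double application of Lemma~\ref{intermedio}, using the shared triangular face of $X$ and $Y$ as the distinguished $2$-simplex. Since $|X\cap Y|=3$, the intersection $W = X\cap Y$ is a triangle, so $W\in\Delta_2$, and $W$ is contained in both tetrahedra. The vertex $v$ appearing in the indexing sets must lie in $W$ (otherwise one of the two intersections would range over no triangles at all), so throughout I assume $v\in W$; this is the only situation in which both sides of the claimed equation are meaningful.

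First I would apply Lemma~\ref{intermedio} to the tetrahedron $X$ together with the triangle $W$, which is legitimate precisely because $v\in W\subseteq X$. This yields
\begin{equation*}
	\smashoperator[r]{\bigcap\limits_{Z\in \Delta_{2} | v \in Z \subseteq X}} F(Z) = \smashoperator[r]{\bigcap\limits_{Z\in \Delta_{1} | v \in Z \subseteq W}} F(Z).
\end{equation*}
Then I would apply the very same lemma to the tetrahedron $Y$ with the \emph{same} triangle $W$, valid since $v\in W\subseteq Y$, obtaining
\begin{equation*}
	\smashoperator[r]{\bigcap\limits_{Z\in \Delta_{2} | v \in Z \subseteq Y}} F(Z) = \smashoperator[r]{\bigcap\limits_{Z\in \Delta_{1} | v \in Z \subseteq W}} F(Z).
\end{equation*}
The right-hand sides of these two displays are literally the same expression, so their left-hand sides must coincide, which is exactly the assertion of the corollary.

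The main---and essentially only---obstacle is the bookkeeping observation that $|X\cap Y|=3$ forces $X\cap Y$ to be a genuine $2$-simplex lying in both tetrahedra, and that the silent vertex $v$ must belong to this common face. Once $W=X\cap Y$ is identified as a triangle through which both tetrahedra factor, the intermediate lemma converts each of the two triangle-indexed intersections into the \emph{identical} edge-indexed intersection over $W$, and the equality is immediate. No new geometric input is needed beyond Lemma~\ref{intermedio}; the corollary is purely a matter of choosing the common face as the bridge between the two tetrahedra.
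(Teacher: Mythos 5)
Your proposal is correct and is essentially the paper's own argument: the paper's entire proof reads ``Use \ref{intermedio} two times with the triangle $X\cap Y$,'' which is precisely your double application of the lemma with $W=X\cap Y$ as the bridge. Your additional remark that $v$ must lie in $X\cap Y$ for the statement to be meaningful is a sensible clarification of a hypothesis the paper leaves implicit, but it does not change the argument.
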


	\begin{proof}
		Use \ref{intermedio} two times with the triangle $X\cap Y$.
	\end{proof}

	Because of \ref{condvec} we have the following observation.

	\begin{obs}
		Let $X,Y$ be two tetrahedra containing $v$. Then there is a finite sequence of tetrahedra $X = X_0,\dots X_{k-1} = Y$ such that $|X_i\cap X_{i+1}| = 3$ for all $0\leq i < k-1$ and $v\in X_i$ for all $1\leq i \leq k-1$.
	\end{obs}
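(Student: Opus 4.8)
The plan is to translate the statement into a connectivity question about the neighbourhood of $v$ and then to settle that question using Observation \ref{condvec}.

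First I would record the dictionary between tetrahedra containing $v$ and the facets of the neighbourhood of $v$. By the definition of neighbourhood the assignment $T\mapsto T\setminus\{v\}$ is a bijection from the set of tetrahedra containing $v$ onto the set of facets (triangles) of the neighbourhood. If two tetrahedra $T,T'$ both contain $v$ then $v\in T\cap T'$, so their intersection is a common face through $v$; I would check that $|T\cap T'|=3$ holds exactly when $T\neq T'$ and the triangles $T\setminus\{v\}$ and $T'\setminus\{v\}$ share a common edge in the neighbourhood, since in that case $(T\cap T')\setminus\{v\}$ is the shared edge and conversely the shared edge together with $v$ is a common triangle of size $3$. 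Under this dictionary, producing a sequence $X=X_0,\dots,X_{k-1}=Y$ of tetrahedra through $v$ with $|X_i\cap X_{i+1}|=3$ is the same as producing a sequence of facets of the neighbourhood in which consecutive facets share an edge, joining $X\setminus\{v\}$ to $Y\setminus\{v\}$.

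Next I would phrase this as connectivity of a graph. Let $G$ be the graph whose vertices are the facets of the neighbourhood of $v$ and whose edges join two facets that share a common edge. The required sequence exists for every choice of $X$ and $Y$ precisely when $G$ is connected, so it suffices to prove that $G$ is connected.

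The final step, and the main obstacle, is to prove that $G$ is connected, using that by Observation \ref{condvec} the neighbourhood of $v$ is homeomorphic to $S^{2}$ and hence connected. For each connected component $C$ of $G$ let $U_C$ be the union of the closed triangles of the facets in $C$; these sets are closed, nonempty and cover the geometric realization of the neighbourhood. I would argue that distinct $U_C$ are disjoint, which together with connectedness of $S^{2}$ forces $G$ to have a single component. A point lying in two different $U_C$ cannot be interior to a triangle (a triangle lies in one component), nor interior to an edge, because each edge of a closed triangulated surface lies in exactly two facets and those two facets are adjacent in $G$, hence in the same component. The one remaining case is that the point is a vertex $w$, and here I would invoke the local manifold structure: the link of $w$ in the $2$-sphere is a circle, so the facets containing $w$ form a single cycle under edge-adjacency and therefore all lie in the same component of $G$. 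This rules out the vertex case as well, so the $U_C$ are pairwise disjoint and $G$ is connected. The delicate point throughout is this last local analysis at a vertex, i.e. turning the manifold hypothesis into the combinatorial statement that the facets around any vertex form one edge-connected cycle; everything else is bookkeeping through the dictionary of the first step.
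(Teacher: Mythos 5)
Your proof is correct and follows the same route the paper intends: the paper states this observation without proof, deriving it directly from Observation \ref{condvec} (the neighbourhood of $v$ is a $2$-sphere), and your argument simply supplies the standard details — translating the chain of tetrahedra into edge-adjacency of facets in the link of $v$ and proving the dual graph of a triangulated closed surface is connected. Nothing in your elaboration departs from or adds to the paper's approach; it just makes the omitted verification explicit.
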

	
	\begin{cor}
		\label{lastone}
		If $X$ and $Y$ are two tetrahedra containing $v$ then 

		\begin{equation*}
		\bigcap\limits_{Z\in \Delta_{2} | v \in Z \subseteq X} F(Z) = \bigcap\limits_{Z\in \Delta_{2} | v \in Z \subseteq Y} F(Z)
		\end{equation*}
	\end{cor}

	We are now finally able to prove $F(\{v\})$ is a singleton for each $v\in V$. We must only verify the following sequence of equalities, where the first equality is given by \ref{enemenosuno}.

		\begin{equation*}
			F(\{v\}) = \smashoperator{ \bigcap\limits_{X\in \Delta_{2} | v \in X} } F(X) =  \smashoperator[l]{\bigcap\limits_{X \in \Delta_3 | v\in X}} (\smashoperator[r]{\bigcap\limits_{Y\in \Delta_{2} | v \in Y \subseteq X} } F(Y) ).
		\end{equation*}

		Finally we note by \ref{lastone} this last intersection contains only one distinct set, which has only one element. This completes the proof of \ref{segundo}.

\section{3-Cyclic Shells}

The main goal of this section is to classify all $3$-cyclic shells. We begin with the definition of an $n$-cyclic shell. 

\begin{defi}[$n$-cyclic shell]

	Let $n\geq 2$. An $n$-cyclic shell is a simplicial complex of dimension $n$ with $k\geq3$ facets $\{H_0,H_1\dots H_{k-1}\}$, such that for all $0\leq i < j\leq k-1$ we have $|T_i\cap T_j|=n$ if $j=i+1$ or $i=0,j=k-1$, and $|T_i\cap T_j|=n-1$ otherwise.
\end{defi}

\begin{obs}
	There is a special family of $n$-cyclic shells we denote $nCW_k$, these contain facets $H_i = X \cup \{v_i,v_{i+1} \}$ for $0\leq i \leq k-2$ and $H_{k-1} = X \cup \{v_{k-1},v_0\}$, where $X$ is a $n-2$ simplex. We denote such $n$-cyclic shells by $nCW_k$. One can see the $3$-cyclic shells are precisely the simplicial complices with the same intersection matrices as $3CW_k$. \\

	In particular we note the neighbourhood of an edge $\{t,b\}$ in the triangulation of a $3$-manifold is spanned by facets $H_i=\{v_{i},v_{i+1},t,b\}$ for $0\leq i \leq k-2$ and $H_{k-1}=\{v_{k-1},v_0,t,b\}$. \end{obs}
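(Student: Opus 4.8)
The plan is to treat the observation as three separate assertions and verify them in turn: that the complex $nCW_k$ genuinely is an $n$-cyclic shell, that for $n=3$ the $n$-cyclic shells are exactly the complexes sharing the intersection matrix of $3CW_k$, and that the tetrahedra surrounding an edge $\{t,b\}$ of a $3$-manifold triangulation assemble into such a shell. The first two are essentially bookkeeping and unwinding of definitions; the third is where the manifold hypothesis does real work, and is the main obstacle.

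For the first assertion I would compute the pairwise intersections of the facets $H_i = X \cup \{v_i,v_{i+1}\}$ directly, reading indices cyclically modulo $k$. Since the $(n-2)$-simplex $X$ (of size $n-1$) lies in every facet, one has $H_i \cap H_j = X \cup (\{v_i,v_{i+1}\} \cap \{v_j,v_{j+1}\})$, so everything reduces to intersecting the two-element index blocks. Cyclically consecutive blocks — including the wrap-around pair $\{v_{k-1},v_0\}$ and $\{v_0,v_1\}$ — share exactly one vertex, giving $|H_i\cap H_j| = (n-1)+1 = n$, whereas non-consecutive blocks are disjoint, giving $|H_i\cap H_j| = n-1$. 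This is precisely the defining condition of an $n$-cyclic shell. I would also record that each $H_i$ has $n+1$ vertices, confirming the complex has dimension $n$, and note that when $k=3$ every pair of facets is cyclically adjacent, so the ``otherwise'' case is vacuous there.

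The second assertion I would dispatch as a short remark rather than a computation. The intersection matrix of $3CW_k$ is the $k\times k$ matrix carrying $4$ on the diagonal, $3$ in the cyclically adjacent positions, and $2$ elsewhere. Being a $3$-cyclic shell means precisely that the $k$ facets admit an ordering whose pairwise intersection ranks realize this pattern; hence, up to the relabeling of facets that the intersection matrix is only defined up to, carrying the intersection matrix of $3CW_k$ and being a $3$-cyclic shell are the same condition.

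The substantive part is the last assertion. The tetrahedra containing $\{t,b\}$ are exactly those of the form $\{t,b,v,w\}$, so the real content is that the outer vertices $v$ — that is, the link of the edge $\{t,b\}$ — form a single cycle $v_0,\dots,v_{k-1}$ whose consecutive pairs span these tetrahedra. I would derive this from Observation \ref{condvec}: the link of the vertex $t$ is a triangulated $2$-sphere, and inside this surface the edge $\{t,b\}$ appears as the vertex $b$, whose link within the surface coincides with the edge-link of $\{t,b\}$ in the original complex. Since a vertex of a triangulated surface has a $1$-sphere as its link (the surface analogue of \ref{condvec}, as in \cite{arocha}), the edge-link is a single $k$-cycle $v_0 v_1 \cdots v_{k-1}$. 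Reading back, each edge $v_i v_{i+1}$ of this cycle together with $\{t,b\}$ spans a tetrahedron $H_i = \{v_i,v_{i+1},t,b\}$, and these exhaust the tetrahedra at $\{t,b\}$; taking $X = \{t,b\}$ then exhibits the neighbourhood as a copy of $3CW_k$. The delicate points are justifying the surface analogue of the sphere condition and checking that the cycle genuinely closes up with no repeated or omitted tetrahedron, which is exactly where the manifold hypothesis is needed.
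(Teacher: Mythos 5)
Your proposal is correct and supplies exactly the verification the paper leaves implicit: the direct computation of $|H_i\cap H_j|$ for $nCW_k$, the observation that the intersection-matrix condition is just the defining condition of a $3$-cyclic shell up to facet relabelling, and the standard argument that the link of the edge $\{t,b\}$ is the link of $b$ inside the $2$-sphere link of $t$ (via Observation \ref{condvec}), hence a single cycle $v_0,\dots,v_{k-1}$ whose consecutive edges span the tetrahedra around $\{t,b\}$. The paper states this as an unproved observation, and your argument is the intended justification; no gaps.
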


In order to classify all $3$-cyclic shells we follow a procedure similar to the one followed to classify $2$-cyclic shells in \cite{arocha} (note these objects are simply called cyclic shells in \cite{arocha}). With this in mind we introduce the $3$-lineal shells.

\begin{defi}[$n$-lineal shell]
	An $n$-lineal shell is a simplicial complex of dimension $n$ spanned by a set of $k\geq 2$ facets $\{H_0,H_1\dots H_{k-1}\}$ such that for all $0\leq i < j \leq k-1$ we have $|T_i\cap T_j|=n$ if $j=i+1$ and $|T_i\cap T_j|=n-1$ otherwise.
\end{defi}

There is an infinite family of $n$-lineal shells which we denote $nLW_k$, these are spanned by $k$ facets $H_i=\{v_{i},v_{i+1}\} \cup X$ for $0\leq i\leq k-1$, where $X$ is an $n-2$-simplex not containing any of the $v_i$. Notice that if $H_0,H_1,\dots H_{k-1}$ spans an $n$-lineal shell then for every $0\leq i < j \leq k-1$ the facets $H_i,H_{i+1}\dots H_j$ also span an $n$-lineal shell. This allows us to classify the $3$-lineal shells starting from the small ones and gluing new facets one by one onto one of the $3$ "unused" faces of $H_0$ or $H_{k-1}$. In doing so we will need to deal with a large number of cases, but the arguments used in each case are simple.\\

Given a $2$-cyclic shell $(V,\Delta)$ and a new vertex $u$ not in $V$ we can consider the simplicial complex $(V\cup \{u\},\Delta')$ of dimension $3$ in which the facets are $\Delta'_{3} = \{ X \cup \{u\} | X\in \Delta_2\}$.

One can see $(V\cup \{u\}, \Delta')$ is a $3$-cyclic shell, we refer to this process as lifting a $2$-shell. We denote a lifted $2$-shell by using the same name as in \cite{arocha} with a $3$ to the left. We now make the definition of a "lifted" $3$-lineal and $3$-cyclic shell precise.

\begin{defi}
	We say a $3$-lineal shell $(V,\Delta)$ (respectively $3$-cyclic shell) is a lifted $3$-lineal shell (respectively $3$-cyclic shell) if there exists a vertex $\alpha$ contained in all its facets. In this case we notice the $2$-dimensional simplicial complex spanned by all the $2$-simplices that do not contain $\alpha$ is a $2$-lineal shell (respectively $2$-cyclic shell), we say $(V,\Delta)$ is obtained by lifitng this $2$-lineal shell (respectively $2$-cyclic shell).

\end{defi}

We now include the classification of $2$-cyclic and $2$-lineal shells found in \cite{arocha}. The classification of $2$-shells (and thus of lifted $3$-shells) will reduce computations when we classify the $3$-shells.

\begin{thm}
	The lineal $2$-shells are $2LW_k$ as well as the following,\\

\begin{tabular}{ c | c | c }
	name & vertex set & facet set \\ \hline
	$LE_4$ & $\{v,a,b,b',a'\}$  & $\{v,a,b\}, \{a,b,b'\}, \{a',b,b'\},\{v,a',b'\}$ \\ \hline
	$LE_5$ & same as $LE_4$ and $c$  & same as $LE_4$ and $\{b',v,c\}$ \\ \hline
	$LE_6$ & same as $LE_5$ and $d$  & same as $LE_5$ and $\{v,d,b\}$ \\ \hline
\end{tabular}

\end{thm}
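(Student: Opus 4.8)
\section*{Proof proposal}

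The plan is to classify the $2$-lineal shells by induction on the number $k$ of facets, building each shell up one triangle at a time; the argument is self-contained and does not use the material of Section $2$. First I would settle the base cases. For $k=2$ two triangles meeting in an edge are forced to be $2LW_2$. For $k=3$, the two triangles adjacent to the middle facet $H_1$ must meet it in \emph{different} edges (otherwise two non-consecutive facets would share an edge), and the remaining vertex of the outer triangle must be new (otherwise an intersection grows to size $2$); this forces the common-apex (fan) configuration $2LW_3$. Hence no exceptional shell can occur below $k=4$.

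For the inductive step, given a $2$-lineal shell $H_0,\dots,H_{k-1}$ with $k\ge 4$ I would delete $H_{k-1}$. The defining conditions restricted to the surviving indices show that $H_0,\dots,H_{k-2}$ is again a $2$-lineal shell, so by the induction hypothesis it is one of $2LW_{k-1}, LE_4, LE_5, LE_6$. Reattaching amounts to gluing one triangle along a free edge of an end facet, and I would allow either end, since reversing the order of the facets preserves the defining conditions, so the glued triangle may be adjacent to the first or the last facet of the smaller shell. The triangle cannot be glued along the edge that the end facet already shares with its neighbour, as then two non-consecutive facets would meet in an edge; this leaves exactly two candidate free edges at each end. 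Writing the new facet as $\{p,q,w\}$ with $\{p,q\}$ the chosen free edge, each condition $|H_{k-1}\cap H_i|=1$ becomes an explicit statement that $w$ must, or must not, lie in the triangle $H_i$, so the whole step reduces to bookkeeping finitely many membership constraints.

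The heart of the argument is carrying out this bookkeeping in each case. When the smaller shell is the fan $2LW_{k-1}$ with apex $x$: gluing along the free edge through $x$ keeps $x$ in every facet and forces $w$ to be genuinely new, giving $2LW_k$; gluing along the opposite rim edge forces $w\in\bigcap_i H_i$ over an initial block $H_0,\dots,H_{k-4}$ of facets, while the constraint against the adjacent apex-facet forces $w\ne x$. Since the intersection of two non-consecutive facets of a fan is exactly $\{x\}$, the block collapses to $\{x\}$ as soon as it contains a non-consecutive pair, so this is satisfiable only when the block has at most two facets, i.e. only for $k\in\{4,5\}$, where $w$ is pinned to a unique old vertex and one recovers $LE_4$ and $LE_5$. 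When the smaller shell is already exceptional, the same membership analysis pins $w$ uniquely and produces $LE_5$ from $LE_4$ and $LE_6$ from $LE_5$, the remaining edge-and-end choices turning out infeasible.

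The step I expect to be the main obstacle, and the one that makes the list finite, is showing that $LE_6$ admits no legal extension at all. Here I would check every free edge at both ends of $LE_6$ together with every choice of the new vertex $w$; in each sub-case the membership constraints coming from the early facets force $w$ into a set that the constraints from the later facets simultaneously exclude, or else force the new triangle to coincide with an existing one, so no extension exists. Once this terminal case is settled, no exceptional shell survives for $k\ge 7$, every such shell is a fan $2LW_k$, and the classification is complete. The only real care needed throughout is the orientation bookkeeping of attaching at either end and being exhaustive over the few edge-and-vertex choices in the exceptional cases.
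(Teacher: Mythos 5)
Your proposal is correct, and it is essentially the approach of the source: the paper itself gives no proof of this theorem (it is imported from \cite{arocha}), but the cited argument --- and the paper's own classification of $3$-lineal shells in Section 3 --- proceeds exactly as you propose, by induction on the number of facets, attaching one triangle at a time along a free edge of an end facet (up to reversing the linear order) and checking the membership constraints imposed by the non-consecutive facets. Your key computations check out: the rim-edge extension of the fan forces $w\in\bigcap_{i\le k-4}H_i$ with $w$ distinct from the apex, hence is feasible only for $k\in\{4,5\}$ where it yields $LE_4$ and $LE_5$; the exceptional shells extend uniquely as $LE_4\to LE_5\to LE_6$; and $LE_6$ admits no further extension, which terminates the exceptional family.
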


\begin{thm}

The cyclic $2$-shells are $2CW_k$ as well as the following,\\

\begin{tabular}{ c | c | c }
	name & vertex set & facet set \\ \hline
	$CE_5$ & same as $LE_4$  &  same as $LE_4$ and $\{v,a,a'\}$ \\ \hline
	$CE_6$ & same as $LE_5$  &  same as $LE_5$ and $\{v,b,c\}$ \\ \hline
\end{tabular}

\end{thm}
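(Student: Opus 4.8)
The plan is to reduce the classification of cyclic $2$-shells to the classification of lineal $2$-shells just obtained, by regarding a cyclic shell as a lineal shell together with one extra facet that ``closes'' it into a cycle. First I would observe that on a cyclic $2$-shell $\{H_0,\dots,H_{k-1}\}$ the edge-sharing relation (the pairs with $|H_i\cap H_j|=2$) is exactly the cycle graph $C_k$: each facet meets only its two cyclic neighbours in an edge. Deleting any single facet $H_j$ therefore breaks this cycle into a path $P_{k-1}$, and the remaining $k-1$ facets, relabelled along the path, satisfy precisely the defining conditions of a lineal shell: consecutive facets meet in an edge and all other pairs meet in a single vertex. By the preceding classification of lineal $2$-shells, this lineal shell is isomorphic to one of $2LW_{k-1}$, $LE_4$, $LE_5$ or $LE_6$.

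Conversely, every cyclic $2$-shell is recovered from such a lineal shell $\{H_0,\dots,H_{k-2}\}$ by adjoining a single closing triangle $H$ subject to $|H\cap H_0|=|H\cap H_{k-2}|=2$ and $|H\cap H_i|=1$ for every interior facet $1\le i\le k-3$. Since $H$ is a triangle meeting both end facets in an edge, these two edges must share exactly one of the three vertices of $H$; hence $H$ is the union of an edge of $H_0$ and an edge of $H_{k-2}$ glued along a common endpoint. This leaves only finitely many candidates for $H$ for each lineal type, and the interior conditions $|H\cap H_i|=1$ single out the admissible ones.

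I would then run this enumeration through the four lineal types. For the generic family $2LW_m$ (centre $w$, path $v_0,\dots,v_m$) one shows that, whenever an interior facet is present, the closing edges cannot avoid $w$, since the two $w$-free edges of the ends would force four distinct vertices into one triangle; the interior conditions then pin down $H=\{w,v_0,v_m\}$, giving exactly the wheel $2CW_{m+1}$, and the small cases $m=2,3$ are checked directly and also yield $2CW_k$. For $LE_4$ and $LE_5$ the short list of candidate triangles is tested against the interior facets, and in each case a unique survivor remains, namely $\{v,a,a'\}$ (producing $CE_5$) and $\{v,b,c\}$ (producing $CE_6$), matching the table. For $LE_6$ one checks that every candidate closing triangle fails some interior condition, so $LE_6$ admits no closure and contributes no new shell. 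Assembling the four cases shows the cyclic $2$-shells are exactly $2CW_k$, $CE_5$ and $CE_6$.

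I expect the main obstacle to be the completeness of this case analysis rather than any individual computation: one must argue uniformly in $k$ that the generic lineal shell closes only to the wheel, ruling out every non-central triangle, handle the small-$k$ boundary where the two end facets leave no interior facets to constrain $H$, and verify that $LE_6$ genuinely fails to close while $LE_4$ and $LE_5$ close uniquely. Some care is also needed to avoid double counting, since a given cyclic shell can be opened at any of its facets, so one should check that the resulting lineal type is well defined up to isomorphism.
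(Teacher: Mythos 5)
The paper does not actually prove this statement: it is imported verbatim from \cite{arocha}, so there is no internal proof to compare against. Your open-and-close strategy --- delete one facet of a cyclic shell to obtain a lineal shell, then enumerate the triangles that close each lineal type back into a cycle --- is, however, exactly the strategy the paper itself deploys one dimension up in Section 3 (classify the $3$-lineal shells by gluing facets one at a time, then ``close'' each of them), and it is essentially the argument of \cite{arocha}. Your individual computations check out: for $2LW_m$ with $m\ge 3$ the hub $w$ is forced into the closing triangle and the interior conditions pin it down to $\{w,v_0,v_m\}$, giving $2CW_{m+1}$; $LE_4$ and $LE_5$ close uniquely to $CE_5=LE_4\cup\{v,a,a'\}$ and $CE_6=LE_5\cup\{v,b,c\}$; and $LE_6$ fails to close because the two end facets meet only in $v$, so the only candidate closing triangle is $\{v,c,d\}$, which misses the interior facet $\{a,b,b'\}$ entirely. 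The double-counting worry in your last paragraph is unnecessary: for completeness of the list you only need that every cyclic shell arises from \emph{some} deletion, so it does not matter that the resulting lineal type may depend on which facet you delete.

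One genuine loose end is the degenerate case $k=3$. Under the definition used in this paper (a cyclic shell is any complex with the intersection matrix of $2CW_k$), the ``book'' of three triangles through a common edge, $\{a,b,c\},\{b,c,d\},\{b,c,e\}$, is a $2$-cyclic shell with $k=3$ that is not isomorphic to $2CW_3$, and it appears in your own enumeration when you close $2LW_2$ by a triangle containing the shared edge. So your claim that the case $m=2$ ``also yields $2CW_k$'' is not right as stated: either the definition must exclude this configuration (e.g.\ by requiring $k\ge 4$, or by working only with shells occurring inside surface triangulations, where an edge lies in exactly two triangles), or the book must be listed and then discarded in the applications. This does not affect how the theorem is used downstream, but a self-contained proof should say something about it.
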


	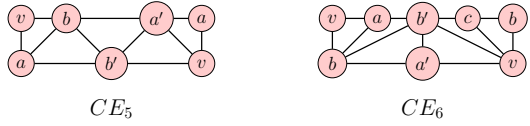
\begin{figure}[h]
		\centering{
		\scalebox{0.6}{
			\begin{tikzpicture}[main_node/.style={circle,fill=red!20,draw,minimum size=1em,inner sep=3pt]}]
				\node[main_node] (v) at (0,1) {$v$};
				\node[main_node] (a) at (0,0) {$a$};
				\node[main_node] (b) at (1,1) {$b$};
				\node[main_node] (bp) at (2,0) {$b'$};
				\node[main_node] (ap) at (3,1) {$a'$};
				\node[main_node] (v2) at (4,0) {$v$};
				\node[main_node] (a2) at (4,1) {$a$};
				\path[draw,thick] (a) edge (v);
				\path[draw,thick] (a) edge (b);
				\path[draw,thick] (b) edge (v);
				\path[draw,thick] (a) edge (bp);
				\path[draw,thick] (b) edge (bp);
				\path[draw,thick] (b) edge (ap);
				\path[draw,thick] (bp) edge (ap);
				\path[draw,thick] (bp) edge (v2);
				\path[draw,thick] (ap) edge (v2);
				\path[draw,thick] (ap) edge (a2);
				\path[draw,thick] (v2) edge (a2);
				\node [label={[font=\large] below:{$CE_5$ } }] (*) at (2,-0.5  ) {};
			\end{tikzpicture}
			\hspace{2cm}
			\begin{tikzpicture}[main_node/.style={circle,fill=red!20,draw,minimum size=1em,inner sep=3pt]}]
				\node[main_node] (v) at (0,1) {$v$};
				\node[main_node] (b) at (0,0) {$b$};
				\node[main_node] (a) at (1,1) {$a$};
				\node[main_node] (bp) at (2,1) {$b'$};
				\node[main_node] (ap) at (2,0) {$a'$};
				\node[main_node] (c) at (3,1) {$c$};
				\node[main_node] (b2) at (4,1) {$b$};
				\node[main_node] (v2) at (4,0) {$v$};
				\path[draw,thick] (a) edge (v);
				\path[draw,thick] (a) edge (b);
				\path[draw,thick] (b) edge (v);
				\path[draw,thick] (a) edge (bp);
				\path[draw,thick] (b) edge (bp);
				\path[draw,thick] (b) edge (ap);
				\path[draw,thick] (bp) edge (ap);
				\path[draw,thick] (bp) edge (v2);
				\path[draw,thick] (ap) edge (v2);
				\path[draw,thick] (b2) edge (c);
				\path[draw,thick] (bp) edge (c);
				\path[draw,thick] (v2) edge (b2);
				\path[draw,thick] (c) edge (v2);
				\node [label={[font=\large] below:{$CE_6$ } }] (*) at (2,-0.5  ) {};
			\end{tikzpicture}
			}
			}
			\caption{ The non-orientable simplicial complices $CE_5$ and $CE_6$}
	\end{figure}

	\vspace{1cm}

	We now begin with the classification of $3$-lineal shells up to isomorfism. Easy arguments show that $3LW_2$ and $3LW_3$ are the only $3$-lineal shells with $2$ and $3$ facets. We continue by analyzing the cases with $4$ facets.

	\subsection{4 facets}

	Every $3$-cyclic shell with $4$ facets can be obtained by gluing a new facet onto $3LW_3$. Because of symmetry we can assume it shares $3$ vertices with $H_2=\{v_2,v_3,t,b\}$ and contains $t$. There are $2$ cases (from here on out $\alpha$ will always be the symbol used for the new vertex, we will initially assume $\alpha$ is a new vertex, and then consider the possibility of identifying $\alpha$ with a preexisting vertex, we will also denote the new facet by $H_k$ or $H_{-1}$ depending on whether it shares $3$ vertices with $H_{k-1}$ or $H_0$) :\\

	$H_3=\{v_3,t,b,\alpha\}$, this yields $3LW_4$\\

	$H_3=\{v_2,v_3,t,\alpha\}$, we must identify $\alpha$ with a vertex of $H_0$, if $\alpha=v_1$ or $\alpha=b$ then $|H_3\cap H_1|=1$, so  $\alpha=v_0$, this yields a $3$-lineal shell we shall denote $3LE_4$.\\

	\subsection{5 facets}

	First we try to extend $3LW_4$. Because of symmetry  we can assume $H_4$ shares $3$ vertices with $H_3=\{v_3,v_4,t,b\}$ and contains $t$, there are $2$ cases:\\

	$H_4=\{v_4,t,b,\alpha\}$, this yields $3LW_5$\\

	$H_4=\{v_3,v_4,t,\alpha\}$, we need $|H_4\cap H_0|=2$ and $|H_4\cap H_1|=2$ so $\alpha$ must be equal to the only common vertex of $H_0$ and $H_1$ that isn't $t$ or $b$, that is $\alpha=v_1$. This yields a $3$-lineal shell we shall denote $3LE_5$\\

	We now try to extend $3LE_4$, we will relable it to have vertices $\{v,a,b,b',a',z\}$ and facets $H_0=\{v,a,b,z\}$,$H_1=\{a,b,b',z\},$\ $
	H_2=\{b,b',a',z\}$,$H_3=\{b',a',v,z\}$. Notice the permutation $(a,a')(b,b')$ is an automorphism, so we can assume $H_4$ shares $3$ vertices with $\{b',a',v,z\}$, there are $3$ cases:\\

	$H_4=\{v,z,b',\alpha\}$, yields $3LE_5$ ( to see this notice $H_1,H_2,H_3,H_4$ span a complex isomorphic to $3LW_4$, so this shell already appeared previously).\\

	$H_4=\{v,a',z,\alpha\}$, the rank of every intersection is correct except for $H_1\cap H_4$, so we must identify $\alpha$  with a vertex exclusive to $H_1$, none exist.\\

	$H_4=\{v,a',b',\alpha\}$, We must have $|H_0\cap H_4|=2$ and $|H_1\cap H_4|=2$, so $\alpha=a,b$ or $z$. The last two options imply $|H_4,H_2|=3$. So we must have $\alpha=a$, this yields a $3$-lineal shell we shall denote $LS_5$.\\

	\begin{lem}

		If a $3$-linear shell with $k$ facets contains $3LW_5$ then it is $3LW_k$

	\end{lem}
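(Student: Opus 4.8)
The plan is to prove this by induction on the number of facets $k$, using the gluing procedure already established for building up $3$-lineal shells one facet at a time. The key structural fact I would exploit is the one just used repeatedly in the preceding cases: any $3$-lineal shell with facets $H_0,\dots,H_{k-1}$ contains, as a sub-shell, every consecutive block $H_i,\dots,H_j$, and a shell with $k$ facets is obtained from one with $k-1$ facets by gluing a single new facet onto one of the three ``free'' faces of $H_0$ or $H_{k-1}$. So suppose a $3$-lineal shell $(V,\Delta)$ with $k \geq 6$ facets contains $3LW_5$; I want to show $(V,\Delta) = 3LW_k$.

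First I would set up the induction. The base case is $k=5$, which is immediate. For the inductive step, I would consider the sub-shell $S$ spanned by $H_0,\dots,H_{k-2}$ (dropping the last facet). Since $(V,\Delta)$ contains $3LW_5$, and since $3LW_5$ lives on five \emph{consecutive} facets, either $S$ already contains a copy of $3LW_5$ (if the $3LW_5$ block does not use $H_{k-1}$), or the symmetric block $H_1,\dots,H_{k-1}$ does and I argue from the other end. In the first case the inductive hypothesis gives $S = 3LW_{k-1}$, and it remains only to show that gluing $H_{k-1}$ onto $3LW_{k-1}$ without destroying the $3LW_5$ pattern forces $H_{k-1}$ to be the ``standard'' continuation $\{v_{k-1}, v_k, t, b\}$, i.e. that $(V,\Delta) = 3LW_k$.

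The heart of the argument is therefore the local gluing analysis at the last facet. Writing $3LW_{k-1}$ with its canonical facets $H_i = \{v_i, v_{i+1}, t, b\}$, I would examine how a new facet $H_{k-1}$ can be attached along a face of $H_{k-2} = \{v_{k-2}, v_{k-1}, t, b\}$. By the symmetry already invoked in the five-facet case, I may assume $H_{k-1}$ shares three vertices with $H_{k-2}$ and contains $t$, leaving the two cases $H_{k-1} = \{v_{k-1}, t, b, \alpha\}$ and $H_{k-1} = \{v_{k-2}, v_{k-1}, t, \alpha\}$. The first yields $3LW_k$ directly. For the second I would use the intersection constraints with the earlier facets: because $3LW_{k-1}$ already contains the rigid $3LW_5$ core, the vertices $v_{k-4}, v_{k-3}, v_{k-2}$ together with $t, b$ are all genuinely distinct and the shell is ``spread out,'' so the requirement $|H_{k-1} \cap H_i| = 2$ for $i \leq k-3$ leaves no admissible choice for $\alpha$ that keeps $b$ out of $H_{k-1}$ while maintaining the lineal pattern; any identification of $\alpha$ with a pre-existing vertex either forces an intersection of the wrong rank with some $H_i$ or collapses $H_{k-1}$ onto $b$, contradicting that we are in the case $b \notin H_{k-1}$.

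The main obstacle I anticipate is precisely ruling out the second gluing case cleanly, since that is where the hypothesis ``contains $3LW_5$'' does its real work: it is exactly the presence of the long rigid $3LW_5$ block (as opposed to a short or ``folded'' exceptional shell like $3LE_4$, $3LE_5$, or $LS_5$) that prevents the problematic identifications which produced the sporadic shells in the five-facet analysis. I would make this precise by showing that in $3LW_k$ the only vertices common to two facets $H_i, H_j$ with $|i-j| \geq 2$ are $t$ and $b$, so that the new facet $H_{k-1}$, if it meets every far facet in a set of size $2$, can only do so through $\{t,b\}$; this immediately forces $b \in H_{k-1}$ and collapses the second case. Care is needed to handle the bookkeeping of which faces of $H_{k-2}$ are still ``free'' and to confirm that the symmetric situation (where the $3LW_5$ block abuts $H_{k-1}$ rather than $H_0$) is genuinely handled by the mirror automorphism $H_i \mapsto H_{k-1-i}$ of the lineal shell.
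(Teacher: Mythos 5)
Your proposal is correct and follows essentially the same route as the paper: reduce to showing that a $3LW_m$ with $m\geq 5$ can only be extended to $3LW_{m+1}$, split the gluing into the two cases $\{v_m,t,b,\alpha\}$ and $\{v_{m-1},v_m,t,\alpha\}$, and kill the second case because $\alpha$ would have to lie in two distant facets whose only common vertices are $t$ and $b$. The paper states this more tersely (using $H_0$ and $H_2$ as the two distant facets), but the argument is the same.
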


	\begin{proof}

		\label{trunca}

		We just have to prove $3LW_k$ can only be extended to $3LW_{k+1}$ for $k\geq 5$. In such an extension we can assume $H_k$ shares $3$ vertices with $\{v_{k-1},v_k,t,b\}$ and $t\in H_k$ There are two cases:

		$H_k=\{v_k,t,b,\alpha\}$, this yields $3LW_{k+1}$\\

		$H_k=\{v_{k-1},v_k,t,\alpha\}$, we have $|H_k\cap H_0|=1$ and $H_k\cap H_2|=1$, so $z$ must be $t$ or $b$, which is impossible.\\

	\end{proof}

	\subsection{6 facets}

	First we try to extend $3LE_5$, we present it with vertices $\{v,a,a',b,b',z,c\}$ and facets $H_0=\{v,a,b,z\},H_1=\{a,b,b',z\},$ \ $H_2=\{b,b',a',z\},H_3=\{b',a',v,z\},H_4=\{b',v,c,z\}$. Notice that if $z$ is in the new facet then the complex must be a lifted $3$-lineal shell, so this would only yield $3LE_6$, there are only two other options:\\

	$H_{-1}=\{v,a,b,\alpha\}$, notice $|H_{-1}\cap H_2|=|H_{-}\cap H_4|=1$, so $\alpha=b'$ or $\alpha=z$,the first option forces $|H_{-1}\cap H_1|=3$ and the second $H_{-1}=H_0$.\\

	$H_{5}=\{b',v,c,\alpha\}$, notice $|H_5\cap H_0|=|H_5\cap H_2|=1$, so $\alpha=b$ or $\alpha=z$, the second option forces $H_5=H_4$ and the first yields a $3$-lineal shell we shall denote $LF_6$\\

	We now try to extend $LS_5$, we present it with vertices $\{v_0,v_1,v_2,v_3,t,b\}$ and facets $H_0=\{v_0,v_1,v_3,t\},H_1=\{v_0,v_1,t,b\},H_2=\{v_1,v_2,t,b\},H_3=\{v_2,v_3,t,b\},H_4=\{v_0,v_2,v_3,b\}$, since the permutation $(v_0,v_3)(v_2,v_3)(t,b)$ is an automorphism we can assume $H_6$ shares $3$ vertices with $H_4$. There are $3$ cases:

	$H_{5}=\{v_0,v_2,v_3,\alpha\}$,  every intersection has correct rank except $H_5\cap H_1$ and $H_5\cap H_2$, so $\alpha$ must be a vertex exclusive to $H_1$ and $H_2$, none exist.\\

	$H_5=\{v_0,v_2,b\}$, every intersection has correct rank except $H_5\cap H_0$, so $\alpha$ must be a vertex exclusive to $H_0$, none exist.\\

	$H_5=\{v_0,v_3,b\}$, every intersection has correct rank except $H_5\cap H_2$, so $\alpha$ must be a vertex exclusive to $H_0$, none exist\\

	\subsection{7 facets}

	We first try to extend $3LE_6$, we present it with vertices $\{d,v,b,a,b',a',c,z\}$ and facets $H_0=\{d,v,b,z\}$, $H_1=\{v,b,a,z\}$, $H_2=\{a,b,b',z\}$, $H_3=\{b,b',a',z\}$, $H_4=\{b',a',v,z\}$, $H_5=\{b',v,c,z\}$, if $z$ is in the new facet then the complex must be a lifted $3$ lineal shell. Notice the permutation $(a,a')(b,b')(c,d)$ is an automorphism of $3LE_6$, so there is only one case:\\

	$H_{6}=\{b',v,c,\alpha\}$, notice $|H_{5}\cap H_0|=|H_5\cap H_2|=1$, so $\alpha=b$, this yields a $3$-lineal shell we shall denote $LF_7$\\

	We now try to extend $LF_6$, we present it with vertices $\{a_0,a_1,a_2,a_3,a_4,a_5,a_6\}$ and facets $H_0=\{a_0,a_1,a_2,a_6\},H_1=\{a_0,a_1,a_2,a_4\},H_2=\{a_2,a_3,a_4,a_0\},H_3=\{a_2,a_3,a_4,a_6\},H_4=\{a_4,a_5,a_6,a_2\},H_5=\{a_4,a_5,a_6,a_0\}$. Notice the permutation $(0,6)(1,5),(2,4)$ is an automorphism, so there are $3$ cases:\\

	$H_6=\{a_5,a_6,a_0,\alpha\}$, notice $|H_6\cap H_1|=|H_6\cap H_3|=1$, so $\alpha=a_2$ or $a_4$, in either case we would have $|H_6\cap H_4|=3$.\\

	$H_6=\{a_4,a_6,a_0,\alpha\}$, this yields a $3$-lineal shell , we can see it is isomorphic to $LF_7$ via the isomorphism $a_0\mapsto v,a_1\mapsto c, a_2\mapsto b',a_3 \mapsto a', a_4\mapsto z, a_5\mapsto a, a_6\mapsto b, \alpha \mapsto d$.\\

	$H_6=\{a_4,a_5,a_0,\alpha\}$, notice $|H_6\cap H_0|=|H_6\cap H_3|=1$, so $\alpha=a_2$ or $a_6$, in either case we would have $|H_6\cap H_4|=3$.\\

	\subsection{8 facets}

	We now try to extend $LF_7$, we present it with vertices $\{a_0,a_1,a_2,a_3,a_4,a_5,a_6,a_7\}$ and facets $H_0=\{a_0,a_1,a_2,a_6\},H_1=\{a_0,a_1,a_2,a_4\},H_2=\{a_2,a_3,a_4,a_0\},H_3=\{a_2,a_3,a_4,a_6\},H_4=\{a_4,a_5,a_6,a_2\},H_5=\{a_4,a_5,a_6,a_0\},H_6=\{a_4,a_6,a_0,a_7\}$. 

	If we add a facet $H_{-1}$ it must be $\{a_2,a_0,a_6,\alpha\}$  with $\alpha$ a new vertex (because this was the only way to extend $LF_6$ ). This works and we call this $3$-lineal shell $LF_8$.

	There are $3$ options for $H_7$:

	$H_7=\{a_6,a_0,a_7,\alpha \}$, notice $|H_7\cap H_1|=|H_7\cap H_3|=1$, so $\alpha=a_2$ or $a_4$, if $\alpha=a_2$ then $|H_0\cap H_7|=3$ and if $\alpha=a_4$ then $H_7=H_6$\\

	$H_7=\{a_4,a_0,a_7,\alpha \}$, notice $|H_7\cap H_0|=|H_7\cap H_3|=1$, so $\alpha=a_2$ or $a_6$, if $\alpha=a_2$ then $|H_1\cap H_1|=3$ and if $\alpha=a_6$ then $H_7=H_6$\\

	$H_7=\{a_4,a_6,a_7,\alpha \}$, notice $|H_7\cap H_0|=|H_7\cap H_2|=1$ so $\alpha=a_0$ or $a_2$, if $\alpha=a_0$ then $H_7=H_6$ and if $\alpha=a_2$ then $|H_7\cap H_2|=3$\\

	\subsection{9 facets}

	We present $LF_8$ with vertices $\{a_{-1},a_0,a_1,a_2,a_3,a_4,a_5,a_6,a_7\}$ and facets $H_0=\{a_2,a_0,a_6,a_{-1}\}$,$H_1=\{a_0,a_1,a_2,a_6\},H_2=\{a_0,a_1,a_2,a_4\},H_3=\{a_2,a_3,a_4,a_0\},H_4=\{a_2,a_3,a_4,a_6\},H_5=\{a_4,a_5,a_6,a_2\},H_6=\{a_4,a_5,a_6,a_0\},H_7=\{a_4,a_6,a_0,a_7\}$. Notice the bijection $(a_{-1},a_7)(a_0,a_6)(a_1,a_5)(a_2,a_4)$ is an isomorphism. So no new facet can be added (because we already showed no new facet can be added to the other end of $LF_7$) . We have now proved the following theorem:\\

	\begin{thm}
	The $3$-lineal shells are $3LE_4,3LE_5,3LE_6,LS_5,LF_6,LF_7,LF_8$ and $3LW_k$ with $k\geq 2$.\\
	\end{thm}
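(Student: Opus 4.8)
The plan is to argue by induction on the number $k$ of facets, exploiting the remark made just after the definition of an $n$-lineal shell: deleting either end facet $H_0$ or $H_{k-1}$ from a $3$-lineal shell leaves a $3$-lineal shell with one fewer facet. Consequently every $3$-lineal shell with $k+1$ facets is obtained from one with $k$ facets by gluing a single new facet onto an end, so the classification reduces to enumerating, for each shell already on the list, all of its valid one-facet extensions. The base of the induction is the elementary fact that $3LW_2$ and $3LW_3$ are the only shells with two and three facets. This turns the theorem into the package of extension computations carried out in the preceding subsections (on $4$ through $9$ facets), and the task is to check that these computations are exhaustive and fit together.

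First I would make the extension step uniform. Given a classified shell $S'=\{H_0,\dots,H_{k-1}\}$, a new facet $H_k$ must meet $H_{k-1}$ in a triangle and every earlier facet in an edge. Hence $H_k$ consists of three of the four vertices of $H_{k-1}$ together with a fourth vertex $\alpha$; since repeating the triangle $H_{k-1}\cap H_{k-2}$ would give $|H_k\cap H_{k-2}|=3$, the facet $H_k$ shares one of the remaining three triangles of $H_{k-1}$, and the automorphisms of $S'$ cut these down to a few representatives. The requirements $|H_k\cap H_i|\le 2$ for $i<k-1$ then either force $\alpha$ to be a genuinely new vertex or pin it to a specific old one, and the equalities $|H_k\cap H_i|=2$ decide which survive. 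Because a lineal shell coincides with its reversal $H_i\mapsto H_{k-1-i}$ as an abstract complex, extending at the $H_0$ end is the same as extending at the $H_{k-1}$ end of the reversed presentation, so considering both ends (with automorphisms reducing the cases) captures everything. A little care is needed to recognise when two different gluings yield isomorphic shells—for instance the extension of $LF_6$ that is isomorphic to $LF_7$ via the explicit relabelling, or the two routes that both produce $LF_7$—so that no spurious shell is recorded.

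The heart of the argument, and the step I expect to be the main obstacle, is \emph{termination}: showing the list is finite apart from the single infinite family $3LW_k$. This rests on three finiteness phenomena. The lemma labelled \ref{trunca} shows that once a shell contains $3LW_5$ every further extension stays inside the $3LW$ family, so no new infinite branch can appear. The lifted family terminates at $3LE_6$, because a lifted $3$-lineal shell restricts to a $2$-lineal shell and, by the recalled classification of $2$-lineal shells, the $LE$ family stops at $LE_6$. Finally the sporadic family $LS_5,LF_6,LF_7,LF_8$ dies out: $LS_5$ admits no valid extension, and $LF_8$ admits none at either end, the second end being controlled by the isomorphism $(a_{-1},a_7)(a_0,a_6)(a_1,a_5)(a_2,a_4)$ of Subsection on nine facets together with the earlier fact that the opposite end of $LF_7$ already failed to extend.

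Assembling these pieces closes the induction: at each level $k$ the only shells that arise are $3LW_k$, the relevant members of the $3LE$ and $LF$ families, and $LS_5$, and each of these extends only to shells again on the list. The delicate points are exactly the isomorphism identifications of the second paragraph and the two-ended non-extension check for $LF_8$; everything else is a routine verification that the prescribed intersection ranks hold.
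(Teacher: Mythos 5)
Your proposal is correct and follows essentially the same route as the paper: induction on the number of facets starting from $3LW_2$ and $3LW_3$, enumeration of one-facet extensions at an end modulo automorphisms, termination of the $3LW$ branch via the lemma on shells containing $3LW_5$, reduction of lifted shells to the known $2$-lineal classification, and the dying out of the sporadic $LS_5,LF_6,LF_7,LF_8$ branch. The remaining content is exactly the explicit case checking already carried out in the subsections on $4$ through $9$ facets.
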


	\subsection{closing the $3$-lineal shells}

	In order to find all $3$-cyclic shells we must analyze all possible ways to add a facet $H_k$ to a $3$-lineal shell $H_0,H_1,\dots  H_{k-1}$ so that the facets $H_0,H_1\dots H_k$ span a $3$-cyclic shell. In such a situation it is clear that both vertices of $H_0\cap H_{k-1}$ must be contained in $H_k$, as well as one vertex from $H_0\setminus H_{k-1}$ and one from $H_{k-1}\setminus H_0$. So we have reduced the possibilities for $H_k$ to $4$ facets. This observation clearly also implies that if a lifted $3$-lineal shell is closed, this results in a lifted $3$-cyclic shell. So we must only consider closing the following $3$-lineal shells: $LS_5,LF_6,LF_7,LF_8,LG_7,LG_8$. This process just consists of revising each of the $4\times 6$ possibilities.\\

	\textbf{$LS_5$:} we present it with vertices $\{v_0,v_1,v_2,v_3,t,b\}$ and facets $H_0=\{v_0,v_1,v_3,t\},H_1=\{v_0,v_1,t,b\},H_2=\{v_1,v_2,t,b\},H_3=\{v_2,v_3,t,b\},H_4=\{v_0,v_2,v_3,b\}$\\

	\begin{tabular}{ c | c }
		$H_5$ & incorrect intersection \\ \hline
		$\{v_0,v_3,v_1,v_2\}$ & none \\ \hline
		$\{v_0,v_3,v_1,b \}$	& $|H_5\cap H_1|=3$ \\ \hline
		$\{v_0,v_3,t,v_2 \}$	& $|H_5\cap H_3|=3$ \\ \hline
		$\{v_0,v_3,t,b\}$ &  $|H_5\cap H_1|=3$  \\ \hline
	\end{tabular}
	\vspace{.5 cm}

	We denote the resulting shell $CS_6$.\\

	\noindent \textbf{$LF_6$:} we present it with vertices $\{a_0,a_1,a_2,a_3,a_4,a_5,a_6\}$ and facets $H_0=\{a_0,a_1,a_2,a_6\},H_1=\{a_0,a_1,a_2,a_4\},H_2=\{a_2,a_3,a_4,a_0\},H_3=\{a_2,a_3,a_4,a_6\},H_4=\{a_4,a_5,a_6,a_2\},H_5=\{a_4,a_5,a_6,a_0\}$

	\begin{tabular}{ c | c }
		$H_6$ & incorrect intersection \\ \hline
		$\{a_0,a_6,a_1,a_4 \}$ & $|H_6\cap H_1|=3$ \\ \hline
		$\{a_0,a_6,a_1,a_5 \}$	& $|H_6\cap H_2|=1$ \\ \hline
		$\{a_0,a_6,a_2,a_4 \}$	& $|H_5\cap H_1|=3$ \\ \hline
		$\{a_0,a_6,a_2,a_5 \}$ &  $|H_5\cap H_4|=3$  \\ \hline
	\end{tabular}
	\vspace{.5 cm}

	\noindent \textbf{$LF_7$:} we present it with vertices $\{a_0,a_1,a_2,a_3,a_4,a_5,a_6,a_7\}$ and facets $H_0=\{a_0,a_1,a_2,a_6\},H_1=\{a_0,a_1,a_2,a_4\},H_2=\{a_2,a_3,a_4,a_0\},H_3=\{a_2,a_3,a_4,a_6\},H_4=\{a_4,a_5,a_6,a_2\},H_5=\{a_4,a_5,a_6,a_0\},H_6=\{a_4,a_6,a_0,a_7\}$\\

	\begin{tabular}{ c | c }
		$H_7$ & incorrect intersection \\ \hline
		$\{a_0,a_6,a_1,a_4 \}$ & $|H_7\cap H_1|=3$ \\ \hline
		$\{a_0,a_6,a_1,a_7 \}$	& $|H_7\cap H_2|=1$ \\ \hline
		$\{a_0,a_6,a_2,a_4 \}$	& $|H_7\cap H_5|=3$ \\ \hline
		$\{a_0,a_6,a_2,a_7 \}$ &  none  \\ \hline
	\end{tabular}
	\vspace{.5 cm}

	We denote the resulting shell $CF_8$.\\

	\textbf{$LF_8$:} we present it with vertices $\{a_{-1},a_0,a_1,a_2,a_3,a_4,a_5,a_6,a_7\}$ and facets $H_0=\{a_2,a_0,a_6,a_{-1}\}$,$H_1=\{a_0,a_1,a_2,a_6\},H_2=\{a_0,a_1,a_2,a_4\},H_3=\{a_2,a_3,a_4,a_0\},H_4=\{a_2,a_3,a_4,a_6\},H_5=\{a_4,a_5,a_6,a_2\},H_6=\{a_4,a_5,a_6,a_0\},H_7=\{a_4,a_6,a_0,a_7\}$\\

	\begin{tabular}{ c | c }
		$H_8$ & incorrect intersection \\ \hline
		$\{a_0,a_6,a_2,a_4 \}$ & $|H_8\cap H_1|=3$ \\ \hline
		$\{a_0,a_6,a_2,a_7 \}$	& $|H_8\cap H_1|=3$ \\ \hline
		$\{a_0,a_6,a_{-1},a_4 \}$	& $|H_8\cap H_6|=3$ \\ \hline
		$\{a_0,a_6,a_{-1},a_7 \}$ &  $|H_7\cap H_2|=1$  \\ \hline
	\end{tabular}
	\vspace{.5 cm}

	\begin{thm}
		The $3$-cyclic shells are $3CE_5,3CE_6,CS_6,CF_8$ and $3CW_k$ for $k\geq 3$.\\
	\end{thm}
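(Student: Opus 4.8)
The plan is to prove the classification by a single structural reduction followed by a finite case check, mirroring exactly the construction of the preceding lineal classification. The key observation is that every $3$-cyclic shell is a closing of a $3$-lineal shell: if $\{G_0,\dots,G_{m-1}\}$ spans a $3$-cyclic shell, then deleting the facet $G_{m-1}$ leaves $\{G_0,\dots,G_{m-2}\}$, and for indices $0\le i<j\le m-2$ the wraparound pair $(0,m-1)$ no longer occurs, so the surviving intersection ranks are precisely those defining a $3$-lineal shell with ends $G_0$ and $G_{m-2}$. Thus $G_{m-1}$ is a valid closing facet of this lineal shell, and conversely every closing of a lineal shell is by definition a cyclic shell. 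Since the previous theorem classifies all $3$-lineal shells, it suffices to enumerate, up to isomorphism, all closings of $3LW_k$, $3LE_4$, $3LE_5$, $3LE_6$, $LS_5$, $LF_6$, $LF_7$, $LF_8$.

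First I would record that closing introduces no new vertices and is constrained to four candidate facets, as in the remark opening this subsection: the closing facet $H_k$ must contain both vertices of $H_0\cap H_{k-1}$ together with one vertex of $H_0\setminus H_{k-1}$ and one of $H_{k-1}\setminus H_0$, which follows from $|H_k\cap H_0|=|H_k\cap H_{k-1}|=3$ and $|H_k|=4$ by inclusion--exclusion (these force $H_0\cap H_{k-1}\subseteq H_k$, leaving exactly one free choice on each side). Next I would dispose of the lifted lineal shells wholesale. If a lineal shell is lifted with apex $\alpha$, then $\alpha\in H_0\cap H_{k-1}$, hence $\alpha\in H_k$, so every closing is again lifted; a lifted $3$-cyclic shell is precisely the lift of the $2$-cyclic shell spanned by its faces avoiding $\alpha$, and that $2$-cyclic shell is a closing of the underlying $2$-lineal shell. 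By the cited classification of $2$-cyclic shells, these lifts are exactly $3CW_k$ ($k\ge 3$), $3CE_5$ and $3CE_6$, and this accounts for all closings of $3LW_k$, $3LE_4$, $3LE_5$, $3LE_6$.

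It then remains to close the four non-lifted lineal shells $LS_5,LF_6,LF_7,LF_8$, which is the finite computation carried out in the tables above: for each one I would test the four candidate facets against the required intersection ranks. This yields $CS_6$ from $LS_5$ and $CF_8$ from $LF_7$, while $LF_6$ and $LF_8$ admit no valid closing. Assembling the two families gives the list $3CW_k$ ($k\ge 3$), $3CE_5$, $3CE_6$, $CS_6$, $CF_8$. To finish I would check these are pairwise non-isomorphic: the $3CW_k$ are distinguished by their number of facets, $CS_6$ and $CF_8$ are non-lifted (no vertex lies in every facet) whereas $3CW_6$ and $3CE_6$ are lifted, and $3CE_6$ is separated from $3CW_6$ by the non-isomorphism of their underlying $2$-cyclic shells.

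The main obstacle is guaranteeing completeness rather than any individual rank computation. Two points need care. First, the reduction must genuinely land inside the classified list: deleting a facet must not destroy a vertex, so that the residual complex really is one of the eight lineal types on the full vertex set. This holds because each vertex of the deleted facet already lies in one of its two cyclic neighbours (each neighbour meets it in three vertices, and the two triples are distinct and together exhaust the deleted facet). Second, one must confirm that the lifted analysis captures all closings of the lifted lineal shells and does not silently drop a cyclic shell whose apex is only revealed after closing; this is exactly the content of the $\alpha\in H_k$ argument, and it is what allows the $2$-dimensional classification to be reused verbatim. Once these are in place, the remaining tables are routine checks of the four candidate facets.
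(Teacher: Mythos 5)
Your proposal follows the paper's own argument essentially verbatim: every $3$-cyclic shell is the closing of a $3$-lineal shell, the closing facet is restricted to four candidates containing $H_0\cap H_{k-1}$, the lifted lineal shells are disposed of at once via the cited $2$-dimensional classification, and the four non-lifted shells $LS_5,LF_6,LF_7,LF_8$ are checked by the tables. The only differences are explicit justifications (pairwise non-isomorphism of the resulting list, and that deleting a facet really lands in one of the classified lineal types) that the paper leaves implicit.
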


\section{ Intersection preserving maps send edges to edges }

We devote this section to proving the following theorem:\\

\begin{thm}
	\label{conditions}
	Let $(V,\Delta)$ and $(V',\Delta')$ be triangulations of $3$-manifolds. If $f:\Delta_{3}\rightarrow \Delta'_3$ is an intersection preserving map and the facets $\{H_0,H_1\dots H_{k-1}\}$ span $3CW_k$ then $\{f(H_0),f(H_1)\dots f_(H_{k-1})\}$ span $3CW_k$.\\
\end{thm}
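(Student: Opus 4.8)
The plan is to reduce to the classification just obtained and then to eliminate the exceptional shells using the manifold hypotheses. Since $f$ preserves every pairwise intersection rank and $H_0,\dots,H_{k-1}$ realise the intersection matrix of $3CW_k$, the images $f(H_0),\dots,f(H_{k-1})$ realise the same matrix, hence span a $3$-cyclic shell on $k$ facets. By the classification of $3$-cyclic shells this image shell is either $3CW_k$ or one of $3CE_5$, $3CE_6$ (when $k=5,6$) or $CS_6$, $CF_8$ (when $k=6,8$). Inspecting the explicit vertex--facet lists, $3CW_k$ is the unique shell on that list in which a single edge is contained in every facet; equivalently it is the only one with $k+2$ vertices, since for a $3$-cyclic shell the boundary has $2k$ triangles and $3k$ edges, so $\chi(\partial)=V-k$, which equals $2$ exactly when $V=k+2$. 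Thus the theorem is equivalent to producing an edge lying in all of $f(H_0),\dots,f(H_{k-1})$, and I would do this by excluding the four exceptional shells one family at a time.

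The lifted shells $3CE_5$ and $3CE_6$ are excluded by Observation \ref{condvec}. In each of them a single apex vertex $\alpha$ belongs to every facet, and the subcomplex spanned by the triangles avoiding $\alpha$ ---that is, the link of $\alpha$ inside the image shell--- is by construction the $2$-cyclic shell $CE_5$, respectively $CE_6$. As $\Delta'$ triangulates a $3$-manifold, $\mathrm{Lk}_{\Delta'}(\alpha)$ is a $2$-sphere and contains this link as a subsurface. But $CE_5$ and $CE_6$ are non-orientable (the M\"obius shells of the figure), and a $2$-sphere admits no non-orientable subsurface; this contradiction removes both lifted possibilities.

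The genuine difficulty, which I expect to be the main obstacle, is to exclude $CS_6$ and $CF_8$. Unlike the lifted shells these are locally indistinguishable from $3CW_k$: a direct check shows every vertex link inside $CS_6$ or $CF_8$ is a disc, so each embeds comfortably as a subcomplex of a $3$-manifold (indeed each is a solid torus, with $\chi(\partial)=0$, rather than the ball $3CW_k$). Consequently no purely local use of the manifold conditions can separate them from $3CW_k$, and the argument must exploit that $f$ is a \emph{global} intersection preserving bijection between two manifolds. The route I would take is to transport the ball structure of $3CW_k$ across $f$: choose an interior edge $g$ of the image shell, consider its closed star in $\Delta'$, which is automatically some $3CW_m$ because the star of an edge in a manifold is a ball, and pull this star back by the intersection preserving map $f^{-1}$. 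A minimal-counterexample induction on the number of facets, combined with the computation that the only possible common edge of the pulled-back shell in $\Delta$ is $\{t,b\}$ itself, should force $\{t,b\}$ to lie in every $f(H_i)$ and hence identify the image with $3CW_k$. Carrying this out without circularly invoking the bijectivity of $F$ on lower simplices ---which Section $2$ only delivers afterwards--- is the delicate point of the whole proof.
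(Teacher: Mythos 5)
Your reduction to the classification of $3$-cyclic shells and your exclusion of the lifted shells $3CE_5$ and $3CE_6$ via the non-orientability of $CE_5$ and $CE_6$ inside a vertex link match the paper exactly, and your observation that $CS_6$ and $CF_8$ cannot be ruled out by any purely local embedding obstruction (each is a triangulated solid torus) is also correct. The gap is that the exclusion of $CS_6$ and $CF_8$ --- which is the entire substantive content of the paper's Section 4, carried out in two detailed lemmas --- is not actually proved in your proposal: you name the difficulty and sketch a strategy (pull the closed star of an edge of the image shell back through $f^{-1}$ and run a minimal-counterexample induction on the number of facets), but you do not carry it out, and you yourself flag the circularity risk. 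In particular the assertion that ``the only possible common edge of the pulled-back shell in $\Delta$ is $\{t,b\}$ itself'' is precisely what needs proof, and nothing in the sketch supplies it.

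The paper's actual mechanism is different and does close the gap; it is local after all, but local around the vertex $t$ in the \emph{source} rather than around an edge in the target. Write $H_i=\{a_i,a_{i+1},t,b\}$ and suppose $f(H_i)=T_i$ with the $T_i$ spanning $CS_6$ (the $CF_8$ case is parallel). For each $i$ let $J_i$ be the unique facet of $\Delta$ other than $H_i$ containing the triangle $\{a_i,a_{i+1},t\}$. Since $f(J_i)$ meets $T_i$ in three vertices and, by bijectivity of $f$, is none of the $T_j$, inspection of $CS_6$ leaves only two candidates for the triangle $f(J_i)\cap T_i$; transporting the resulting intersection ranks back through $f$ forces the fourth vertex of $J_i$ to be $a_{i+3}$ or $a_{i+4}$, so the neighbourhood of $t$ in $\Delta$ must contain the triangle $\{a_i,a_{i+1},a_{i+3}\}$ or $\{a_i,a_{i+1},a_{i+4}\}$ for every $i$. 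But by \ref{condvec} that neighbourhood is a $2$-sphere, the triangles $\{a_i,a_{i+1},b\}$ already span a disk bounded by the cycle $a_0,\dots,a_5$, and the remaining triangles must therefore span a complementary disk with the same boundary --- impossible, because the forced triangles contain chords of the hexagon that cross. Your write-up needs this argument (or a completed substitute for your pull-back induction) before the theorem is established.
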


\begin{obs}
		If $(V,\Delta)$ is a triangulation of a $3$-manifold it cannot contain  $3E_5$ or $3E_6$, as the neighbourhood of the lifting vertex would contain a copy of $E_5$ or $E_6$, this is impossible as these are non-orientable and would contradict \ref{condvec}.\\
\end{obs}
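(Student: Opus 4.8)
The plan is to use the fact that $3CE_5$ and $3CE_6$ (the objects the statement abbreviates as $3E_5$ and $3E_6$) are \emph{lifted} cyclic shells, so that each carries a distinguished lifting vertex $\alpha$ lying in all of its facets, and then to exhibit a forbidden non-orientable surface inside the neighbourhood of $\alpha$.

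First I would recall from the definition of a lifted shell that $3CE_5$ (respectively $3CE_6$) has a vertex $\alpha$ contained in every facet, and that the $2$-simplices avoiding $\alpha$ span a copy of the $2$-cyclic shell $CE_5$ (respectively $CE_6$). Thus if some facets $T_0,\dots,T_{k-1}$ of $\Delta$ span a subcomplex isomorphic to $3CE_5$, each $T_i$ has the form $T_i = S_i \cup \{\alpha\}$ with $S_i = T_i \setminus \{\alpha\}$, and the triangles $S_i$ span $CE_5$. Since the $T_i$ are distinct facets all containing $\alpha$, the $S_i$ are distinct and their face relations agree with those of $CE_5$, so they genuinely span a subcomplex isomorphic to $CE_5$ (respectively $CE_6$).

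Next I would invoke the definition of the neighbourhood of $\alpha$, namely the $2$-complex spanned by all triangles $X \setminus \{\alpha\}$ with $\alpha \in X \in \Delta_3$. Each $S_i$ is one of these spanning triangles, so the copy of $CE_5$ (respectively $CE_6$) sits inside the neighbourhood of $\alpha$ as a subcomplex. By Observation~\ref{condvec} this neighbourhood is homeomorphic to $S^2$, so we would have realized $CE_5$ (respectively $CE_6$) as an embedded subcomplex of a triangulated $2$-sphere.

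The contradiction then comes from orientability. The complexes $CE_5$ and $CE_6$ are closed cyclic bands of triangles whose gluing reverses orientation once around the cycle, so they are non-orientable (Möbius bands), as recorded in the caption of the figure; this is verified directly by orienting the triangles consistently around the band and finding that the orientations fail to match when the loop closes. But $S^2$ is orientable, and a subcomplex of a triangulated orientable surface that is itself a surface inherits an orientation by restriction, hence must be orientable — equivalently, the core circle of a Möbius band is one-sided, whereas $S^2$ contains no one-sided simple closed curve. Therefore no embedding of $CE_5$ or $CE_6$ into $S^2$ exists, contradicting the previous step, and $\Delta$ can contain neither $3CE_5$ nor $3CE_6$. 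The only genuinely nontrivial point, which I regard as the crux, is this embedding obstruction; I would discharge it with the standard fact that a non-orientable surface does not embed into any orientable surface.
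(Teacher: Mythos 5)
Your proof is correct and follows essentially the same route as the paper: you identify the lifting vertex $\alpha$, observe that the triangles $T_i \setminus \{\alpha\}$ place a copy of $CE_5$ (resp.\ $CE_6$) inside the neighbourhood of $\alpha$, and derive a contradiction from the non-orientability of these complexes against Observation~\ref{condvec}. You merely spell out the embedding obstruction (a non-orientable surface cannot embed in the orientable sphere $S^2$) that the paper leaves implicit, which is a welcome elaboration rather than a departure.
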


With this in mind we will only have to prove two lemmas concerning $CS_6$ and $CF_8$ to complete the proof of \ref{conditions}. Surprisingly the proof of both lemmas are rather similar.

\begin{lem}

	If $f:(V,\Delta)\rightarrow (V',\Delta')$ is an intersection preserving map between triangulations of $3$-manifolds and the facets $\{H_0,H_1,H_2,H_3,H_4,H_5\}$ span $3CW_6$ then $\{f(H_0),f(H_1)\dots f(H_{n-1})\}$ cannot span $CS_6$.\\

\end{lem}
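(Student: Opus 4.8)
My plan is to argue by contradiction, exploiting the fact that on the domain side the configuration $3CW_6$ is forced to be an honest edge neighbourhood, whereas $CS_6$ is not. Since $f$ preserves intersection numbers, the six facets $f(H_0),\dots,f(H_5)$ span a $3$-cyclic shell with six facets; by the classification this is $3CW_6$, $3CE_6$ or $CS_6$, and $3CE_6$ is already excluded by the observation preceding the lemma. So assume for contradiction that they span $CS_6$. The dual adjacency graph of a $3$-cyclic shell is a $6$-cycle and $f$ preserves it, so the indices may be taken so that $H_i,H_{i+1}$ are cyclically adjacent and $f(H_0),f(H_3)$ form a diagonal (distance-$3$) pair of $CS_6$. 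On the domain, the observation that a $3CW_k$ in a $3$-manifold is the neighbourhood of an edge tells me that $H_0,\dots,H_5$ are exactly the tetrahedra of $\Delta$ containing a common edge $\varepsilon=\{p,q\}$, and that these are \emph{all} such tetrahedra; in particular the diagonal pair satisfies $H_0\cap H_3=\varepsilon$.

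The heart of the proof is the edge $e:=f(H_0)\cap f(H_3)$. An inspection of the facet list of $CS_6$ shows that the common edge of any diagonal pair lies in \emph{no other} facet, so $e$ is contained in exactly the two facets $f(H_0),f(H_3)$, which moreover meet only in $e$. I would then pass to the neighbourhood $N'$ of $e$ in the manifold $\Delta'$: by the same observation it is a $3CW_m$ with $m\ge 3$, it contains $f(H_0),f(H_3)$ as non-adjacent facets, and every other facet of $N'$ lies outside $CS_6$ (only $f(H_0),f(H_3)$ contain $e$). Now I pull $N'$ back. Since $f^{-1}$ is again an intersection-preserving map between triangulations of $3$-manifolds, $S:=f^{-1}(N')$ is a $3$-cyclic shell with $m$ facets containing $H_0,H_3$; from $|f(H_0)\cap f(H_3)|=2$ and $H_0\cap H_3=\varepsilon$, and because the remaining facets of $S$ are preimages of facets of $N'$ outside $CS_6$ (hence are not among $H_0,\dots,H_5$ and so, these being all tetrahedra through $\varepsilon$, do not contain $\varepsilon$), the two facets $H_0,H_3$ are the only facets of $S$ containing $\varepsilon$.

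This already rules out the good case. If $S$ were $3CW_m$ it would have a common edge lying in every facet; that edge would be contained in $H_0\cap H_3=\varepsilon$, hence equal to $\varepsilon$, forcing every facet of $S$ to contain $\varepsilon$. But $m\ge 3$, so $S$ has a facet other than $H_0,H_3$, which does not contain $\varepsilon$ — a contradiction. Since $S$ can be neither $3CE_5$ nor $3CE_6$ (non-orientable, hence impossible in a $3$-manifold), the only surviving possibility is that $S$ is itself exceptional, namely $S\cong CS_6$ (so $m=6$) or $S\cong CF_8$ (so $m=8$).

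I expect this last possibility to be the main obstacle, and this is precisely the point at which the $CS_6$ and $CF_8$ statements become "rather similar" and should be proved together. The situation $S\cong CS_6$ or $CF_8$ says that $f^{-1}$ again carries an edge neighbourhood $3CW_m$ onto an exceptional shell. I would close this by a descent: run the same construction on a diagonal edge of $S$ different from $\varepsilon$ — its two facets do not contain $\varepsilon$, so they are genuinely new — obtaining once more an exceptional shell and a tetrahedron not previously used; since $\Delta$ and $\Delta'$ are finite, this cannot continue indefinitely, so at some stage the relevant neighbourhood must push or pull back to a genuine $3CW$, contradicting the previous paragraph. The $CF_8$ lemma is then obtained verbatim, with $e$ replaced by the analogous edge of $CF_8$ lying in exactly two non-adjacent facets. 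The delicate points will be the bookkeeping of the descent (selecting the new diagonal edge and checking that a previously unused tetrahedron really appears at each step) and verifying that the orientability exclusion of $3CE_5,3CE_6$ remains legitimately available throughout.
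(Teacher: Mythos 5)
Your approach is genuinely different from the paper's: the paper stays local, introducing for each $i$ the partner tetrahedron $J_i$ on the triangle $\{a_i,a_{i+1},t\}$, using the intersection ranks with the $T_j$ to pin down $J_i$ to two possibilities, and then deriving a contradiction from the fact that the link of $t$ must be a disk bounded by the hexagon $a_0\dots a_5$ while the forced triangles have crossing chords. Your first step is sound and verifiable: in $CS_6$ the common edge of a diagonal pair does lie in no other facet, so the edge neighbourhood $N'$ of $e=f(H_0)\cap f(H_3)$ is a $3CW_m$ whose pullback $S=f^{-1}(N')$ is a $3$-cyclic shell having only $H_0,H_3$ through $\varepsilon$, and the argument that $S$ cannot be a $3CW_m$ is correct. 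The problem is that this only trades one exceptional shell for another, and the two ways you propose to finish both have genuine gaps.

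First, the termination of the descent is not established. Your own argument shows that \emph{every} stage produces an exceptional shell, never a genuine $3CW$, so the contradiction must come from producing infinitely many distinct simplices; but finiteness of $\Delta,\Delta'$ only forbids that, not an eventually periodic walk. The only "newness" you certify is that the facets of the chosen diagonal pair of $S$ avoid $\varepsilon$, i.e.\ are new relative to $H_0,\dots,H_5$ --- nothing prevents the edges $\varepsilon_1,\varepsilon_3,\varepsilon_5,\dots$ (or the tetrahedra appearing in the successive neighbourhoods) from recurring after a few steps. A monotone quantity or an injectivity argument is needed and is absent. Second, the $CF_8$ branch of the descent cannot be run "verbatim": $CF_8$ has \emph{no} edge lying in exactly two non-adjacent facets. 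Writing its facets as in the paper, every edge arising as the intersection of a non-adjacent pair lies in four facets --- e.g.\ $\{a_2,a_6\}=H_0\cap H_3=H_0\cap H_4=H_3\cap H_7=H_4\cap H_7$ is contained in $H_0,H_3,H_4,H_7$, and similarly $\{a_0,a_4\}$ lies in $H_1,H_2,H_5,H_6$; the only edges in exactly two facets are those of adjacent pairs, whose images intersect in rank $3$, not $2$, so the key step (forcing the would-be common edge of the pushed-forward $3CW$ to equal the previous edge) does not go through. This case can probably be salvaged by analysing the four-facet configuration through such an edge (it is not a $3CW_4$, which is itself a contradiction with being an edge neighbourhood), but that analysis is not in your proposal, and as written the argument fails exactly where you predicted it would be delicate.
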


\begin{proof}
	Assume it does, let $H_0=\{a_0,a_1,t,b\},H_1=\{a_1,a_2,t,b\},H_2=\{a_2,a_3,t,b\},H_3=\{a_3,a_4,t,b\},H_4=\{a_4,a_5,t,b\},H_5=\{a_5,a_0,t,b\}$ and $T_0=\{b_0,b_1,b_2,b_3\},T_1=\{b_1,b_2,b_3,b_4,\},T_2=\{b_2,b_3,b_4,b_5\},T_3=\{b_3,b_4,b_5,b_0\},T_4=\{b_4,b_5,b_0,b_1\},T_5=\{b_5,b_0,b_1,b_2\}$ such that $F(H_i)=T_i$ for all $0\leq i < 6$.\\

	\begin{figure}[h]
	\foreach \n  in {0,1,...,5} {
		\scalebox{0.6}{
				\begin{tikzpicture}[main_node/.style={circle,fill=red!20,draw,minimum size=2em,inner sep=3pt]}]
					\foreach \x in {0,1,...,5}
					\node[main_node] (\x) at (135-360/6*\x:2) {$b_{\x}$};
					\pgfmathsetmacro{\z}{int(mod(\n,6))}
					\pgfmathsetmacro{\f}{int(mod(\n+1,6))}
					\pgfmathsetmacro{\s}{int(mod(\n+2,6))}
					\pgfmathsetmacro{\t}{int(mod(\n+3,6))}
					\path[draw,thick] (\z) edge (\f);
					\path[draw,thick] (\z) edge (\s);
					\path[draw,thick] (\z) edge (\t);
					\path[draw,thick] (\f) edge (\s);
					\path[draw,thick] (\f) edge (\t);
					\path[draw,thick] (\s) edge (\t);
					\node [label={[font=\huge] below:{$T_\n$ } }] (*) at (0,-3  ) {};
				\end{tikzpicture}
				}
				}
				\caption{ The facets $T_i$ }
	\end{figure}
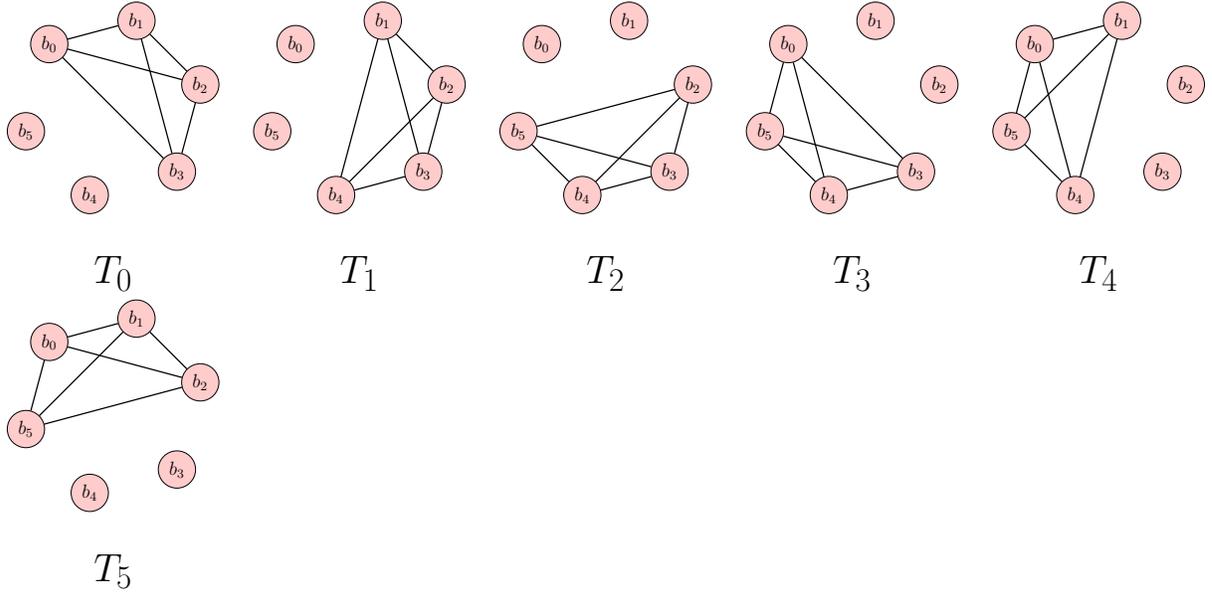

	\noindent Let $J_0=\{a_0,a_1,t,\alpha\}$ be the only other facet of $(V,\Delta)$ with $\{a_0,a_1,t\}\subseteq J_0$. Notice that $\{b_0,b_1,b_3\}\subseteq f(J_0)$ or $\{b_0,b_2,b_3\}\subseteq f(J_0)$. This is because $f(J_0)$ must share three vertices with $T_0$ and cannot be one of the $T_i$, as $f$ is bijective. \\

	\noindent Case $1$: $\{b_0,b_1,b_3\} \subseteq f(J_0)$, then we have $|f(J_0)\cap T_3|,|f(J_0)\cap T_4|\geq 2$. This implies $|J_0\cap H_3|,|J_0\cap H_4|\geq 2$. Which is only possible if $\alpha=a_4$.\\

	\noindent Case $2$: $\{b_0,b_2,b_3\}\subseteq f(J_0)$, then we have $|f(J_0)\cap T_2|,|f(J_0)\cap T_3|\geq 2$. This implies $|J_0\cap H_2|,|J_0\cap H_3|\geq 2$. Which is only possible if $\alpha=a_3$.\\

	\noindent Hence $\alpha=a_3$ or $a_4$. We can define $J_i$ analogously for $0\leq i <6$. Explicitly $J_i$ is the only other facet containing $H_i - \{b\}$. Because of rotational simmetry we have the following observation.

	\begin{obs}
		$J_i=\{a_i,a_{i+1},t,a_{i+3}\}$ or $J_i=\{a_i,a_{i+1},t,a_{i+4}\}$ for $0\leq i < 6$ (here we are using the notation $a_i = a_{i-6}$ for $6\leq i < 12$).
	\end{obs}

	\begin{cor}

		\label{triauno}
		For each $0\leq i < 6$ at least one of the triangles $\{a_i,a_{i+1},a_{i+3}\}$ or $\{a_i,a_{i+1},a_{i+4}\}$ appears in the boundary of $t$.

	\end{cor}

	\noindent Now notice that the triangles $\{a_i,a_{i+1},b\}$ are all contained in the boundary of $t$ and span a surface homeomorphic to a disk whose boundary  is the cycle $a_0,a_1,a_2,a_3,a_4,a_5$. Therefore the remaining triangles in the boundary of $t$ must also span a disk with the same boundary.\\

	Applying \ref{triauno} with $i=0$ we get two cases,\\

	Case $1$: Triangle $\{a_0,a_1,a_3\}$ appears in the boundary of $t$. Using \ref{triauno} with $i=1$ we have at least one of $\{a_1,a_2,a_4\}$ and $\{a_1,a_2,a_5\}$ must appear. Notice that the edge $a_0a_3\in\{a_0,a_1,a_3\}$ crosses both $a_1a_4\in \{a_1,a_2,a_4\}$ and $a_1a_5\in \{a_1,a_2,a_5\}$, this is a contradiction, as the triangles would not form a disk with boundary $a_0,a_1,a_2,a_3,a_4,a_5$.\\

	Case $2$: Triangle $\{a_0,a_1,a_4\}$ appears in the boundary of $t$. Using \ref{triauno} with $i=2$ we have at least one of $\{a_2,a_3,a_5\}$ and $\{a_2,a_3,a_0\}$ must appear. Notice that the edge $a_1a_4\in\{a_0,a_1,a_4\}$ crosses both $a_3a_5\in \{a_2,a_3,a_5\}$ and $a_3a_0\in \{a_1,a_3,a_0 \}$, this is a contradiction, as the triangles would not form a disk with boundary $a_0,a_1,a_2,a_3,a_4,a_5$.\\

\end{proof}

\begin{lem}

	If $f:(V,\Delta)\rightarrow (V',\Delta')$ is an intersection preserving map between triangulations of $3$-manifolds and the facets $\{H_0,H_1,H_2,H_3,H_4,H_5,H_6,H_7\}$ span $3CW_8$ then $\{f(H_0),f(H_1)\dots f(H_{n-1})\}$ cannot span $CF_8$.\\

\end{lem}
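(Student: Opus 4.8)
The plan is to argue by contradiction, following exactly the template of the preceding lemma for $CS_6$. First I would fix coordinates: realize the source $3CW_8$ as the neighbourhood of an edge $\{t,b\}$, so that $H_i=\{a_i,a_{i+1},t,b\}$ with indices read modulo $8$, and label the image $CF_8$ by vertices $b_0,\dots,b_7$ with $T_i=f(H_i)$ the facet list obtained when $LF_7$ was closed, namely $T_0=\{b_0,b_1,b_2,b_6\}$, $T_1=\{b_0,b_1,b_2,b_4\}$, $T_2=\{b_0,b_2,b_3,b_4\}$, $T_3=\{b_2,b_3,b_4,b_6\}$, $T_4=\{b_2,b_4,b_5,b_6\}$, $T_5=\{b_0,b_4,b_5,b_6\}$, $T_6=\{b_0,b_4,b_6,b_7\}$, $T_7=\{b_0,b_2,b_6,b_7\}$. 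Since all eight non-consecutive pairwise intersections have rank $2$, the only triangles shared by two of the $T_i$ are the consecutive intersections $T_{i-1}\cap T_i$ and $T_i\cap T_{i+1}$; I call these the two \emph{internal} faces of $T_i$ and the remaining two its \emph{external} faces, which I would list explicitly for each $i$.

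The geometric input is the same as before. For each $i$ let $J_i=\{a_i,a_{i+1},t,\alpha_i\}$ be the unique facet other than $H_i$ containing $\{a_i,a_{i+1},t\}$, which exists and is unique because every triangle of a $3$-manifold lies in exactly two tetrahedra, and note $\alpha_i\neq b$. As $f$ is intersection preserving, $|f(J_i)\cap T_i|=|J_i\cap H_i|=3$, so $f(J_i)$ meets $T_i$ in one of its four faces, and $f(J_i)$ is none of $T_0,\dots,T_7$ because $J_i$ contains both $a_i$ and $a_{i+1}$ yet differs from $H_i$. Were $f(J_i)\cap T_i$ an internal face of $T_i$, that triangle would lie in three distinct facets of $(V',\Delta')$, contradicting the manifold condition; hence $f(J_i)\cap T_i$ is one of the two external faces of $T_i$.

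Next I would convert each external face into a constraint on $\alpha_i$: if an external face $S$ of $T_i$ satisfies $|S\cap T_j|=2$ for some $j\notin\{i-1,i,i+1\}$, then $|J_i\cap H_j|\geq 2$; since $a_i,a_{i+1}\notin H_j$ for such $j$ and $\alpha_i\neq b$, this forces $\alpha_i\in\{a_j,a_{j+1}\}$, and intersecting the constraints coming from all such $j$ pins $\alpha_i$ down. Carrying this out (one may halve the work using the order-four symmetry $b_k\mapsto b_{k+2}$ of $CF_8$, which is $f$-compatible with the shift $a_k\mapsto a_{k+2}$ of the source) gives in particular $\alpha_0\in\{a_4,a_6\}$, $\alpha_2\in\{a_0,a_6\}$ and $\alpha_6\in\{a_2,a_4\}$. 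Consequently each triangle $\{a_i,a_{i+1},\alpha_i\}$ belongs to the link of $t$.

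The contradiction then comes from planarity in the link of $t$, which is a $2$-sphere by Observation \ref{condvec}. The triangles $\{a_i,a_{i+1},b\}$ form the star of $b$ there, a disk with boundary the $8$-cycle $a_0a_1\cdots a_7$; its complement is a triangulated disk $D$ with the same boundary, and each $\{a_i,a_{i+1},\alpha_i\}$ lies in $D$ with $a_ia_{i+1}$ a boundary edge and $a_i\alpha_i$, $a_{i+1}\alpha_i$ interior chords. Two chords whose endpoints interleave along the boundary cycle must cross and cannot both be edges of $D$. If $\alpha_0=a_4$, the chord $a_1a_4$ interleaves with a chord of the triangle at index $2$ (either $a_2a_6$ or $a_0a_3$), while if $\alpha_0=a_6$, the chord $a_1a_6$ interleaves with a chord of the triangle at index $6$ (either $a_2a_7$ or $a_4a_7$); in every case $D$ would contain two crossing chords, the desired contradiction. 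The main obstacle is the middle step — correctly recording the external faces of all the $T_i$ and extracting the forced values of the $\alpha_i$ — whereas the final planarity argument is essentially identical to the one already used for $CS_6$.
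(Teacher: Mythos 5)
Your proposal is correct and follows essentially the same route as the paper: identify the second tetrahedron $J_i$ over each triangle $\{a_i,a_{i+1},t\}$, use intersection preservation together with the two non-shared (external) faces of $T_i$ to force $\alpha_i$ into a two-element subset of $\{a_0,a_2,a_4,a_6\}$ (your computed values for $\alpha_0,\alpha_2,\alpha_6$ agree with the paper's), and then derive a contradiction from two interleaving chords in the disk complementing the star of $b$ in the link of $t$. Your explicit justification that $f(J_i)\cap T_i$ cannot be an internal face (three tetrahedra sharing a triangle) is in fact slightly more careful than the paper's "$f(J_0)$ cannot be one of the $T_i$".
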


\begin{proof}

	Assume it does, let $H_0=\{a_0,a_1,t,b\},H_1=\{a_1,a_2,t,b\},H_2=\{a_2,a_3,t,b\},H_3=\{a_3,a_4,t,b\},H_4=\{a_4,a_5,t,b\},H_6=\{a_6,a_7,t,b\},H_7=\{a_7,a_0,t,b\}$ and $T_0=\{b_0,b_1,b_2,b_6\},T_1=\{b_0,b_1,b_2,b_4\},T_2=\{b_2,b_3,b_4,b_0\},T_3=\{b_2,b_3,b_4,b_6\},T_4=\{b_4,b_5,b_6,b_2\},T_5=\{b_4,b_5,b_6,b_0\},T_6=\{b_6,b_7,b_0,b_4\},T_7=\{b_6,b_7,b_0,b_2\}$ such that $f(H_i)=T_i$ for all $0\leq i < 8$.\\

	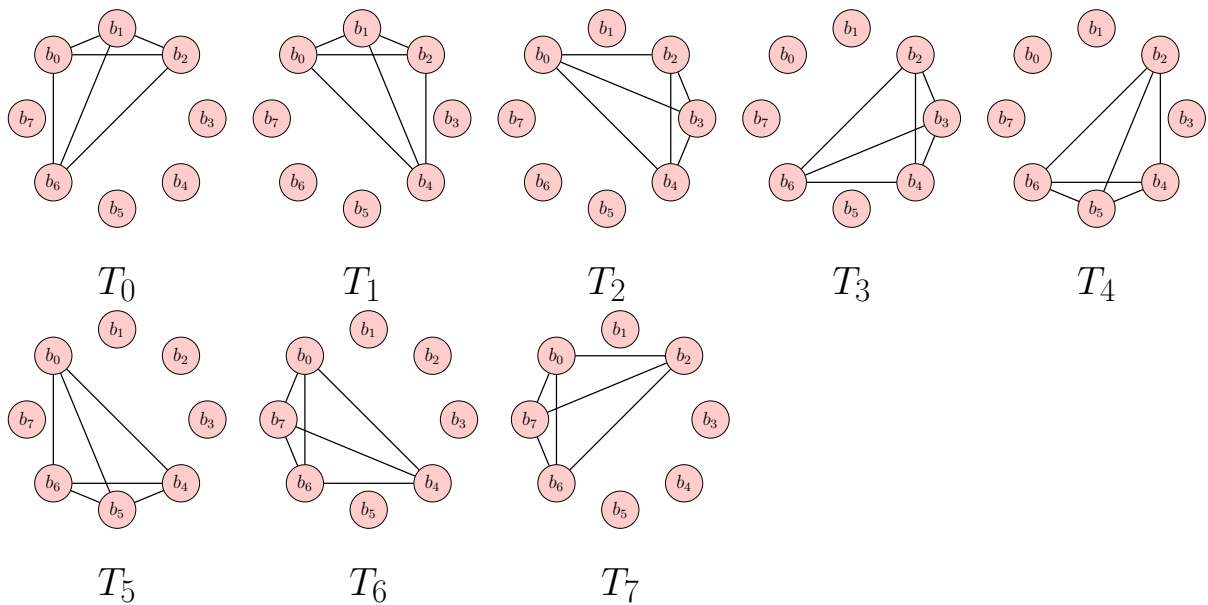
\begin{figure}[h]
		\foreach \n  in {0,1,...,7} {
			\scalebox{0.6}{
				\begin{tikzpicture}[main_node/.style={circle,fill=red!20,draw,minimum size=2em,inner sep=3pt]}]
					\foreach \x in {0,1,...,7}
					\node[main_node] (\x) at (135-360/8*\x:2) {$b_{\x}$};
					\pgfmathsetmacro{\l}{int(mod(\n+6,8))}
					\pgfmathsetmacro{\m}{int(mod(\n+7,8))}
					\pgfmathsetmacro{\o}{int(mod(\n+1,8))}
					\pgfmathsetmacro{\p}{int(mod(\n+2,8))}
					\pgfmathsetmacro{\v}{int(mod(\n+3,8))}
					\ifodd \n
					\path[draw,thick] (\n) edge (\m);
					\path[draw,thick] (\n) edge (\o);
					\path[draw,thick] (\n) edge (\v);
					\path[draw,thick] (\m) edge (\o);
					\path[draw,thick] (\m) edge (\v);
					\path[draw,thick] (\o) edge (\v);
					\else
					\path[draw,thick] (\n) edge (\o);
					\path[draw,thick] (\n) edge (\p);
					\path[draw,thick] (\n) edge (\l);
					\path[draw,thick] (\o) edge (\p);
					\path[draw,thick] (\o) edge (\l);
					\path[draw,thick] (\p) edge (\l);
					\fi
					\node [label={[font=\huge] below:{$T_\n$ } }] (*) at (0,-3  ) {};
				\end{tikzpicture}
				}
				}
				\caption{ The facets $T_i$ }
	\end{figure}

	\noindent Let $J_0=\{a_0,a_1,t,\alpha\}$ be the only other facet in $(V,\Delta)$ with $\{a_0,a_1,t\}\subseteq J_0$. Notice that $\{b_0,b_1,b_6\}\subseteq f(J_0)$ or $\{b_1,b_2,b_6\}\subseteq f(J_0)$. This is because $f(J_0)$ must share three vertices with $T_0$ and cannot be one of the $T_i$, as $f$ is bijective.\\

	\noindent Case $1$: $\{b_0,b_1,b_6\}\subseteq f(J_0)$, then we have $|f(J_0)\cap T_5|,|f(J_0)\cap T_6|\geq 2$. This implies $|J_0\cap H_5|,|J_0\cap H_6|\geq 2$. Which is only possible if $\alpha=a_6$.\\

	\noindent Case $2$: $\{b_1,b_2,b_6\}\subseteq f(J_0)$, then we have $|f(J_0)\cap T_3|,|f(J_0)\cap T_4|\geq 2$. This implies $|J_0\cap H_5|,|J_0\cap H_6|\geq 2$. Which is only possible if $\alpha=a_4$.\\

	\noindent Hence $\alpha=a_4$ or $a_6$. We can define $J_i$ analogously for $0\leq i <8$. Explicitly $J_i$ is the only other facet containing $H_i - \{b\}$. Notice that both complices have the dihedral symmetries of the squares $a_0,a_2,a_4,a_6$ and $b_0,b_2,b_4,b_6$. These symmetries determine the possiblilities for all $J_i$.

	\begin{obs}
		If $J_i = \{a_i,a_{i+1},t,\alpha\}$ then $\alpha$ must be one of the two vertices in $\{a_0,a_2,a_4,a_6\}$ opposite the odd vertex among $a_i,a_{i+1}$ (here we are using the notation $a_i = a_{i-8}$ for $8\leq i < 16$).
	\end{obs}

	\begin{cor}
		\label{triados}
		For each $0\leq i < 8$ at least one of the triangles $\{a_1,a_{i+1},\alpha_0\}$ and $\{a_1,a_{i+1},\alpha_1\}$ appears in the boundary of $t$ (where $\alpha_0$ and $\alpha_1$ are the two vertices in $\{a_0,a_2,a_4,a_6\}$ opposite the odd vertex among $a_i,a_{i+1}$.
	\end{cor}

	\noindent Now notice that the triangles $\{a_i,a_{i+1},b\}$ are all contained in the boundary of $t$ and span a surface homeomorphic to a disk whose boundary is the cycle $a_0,a_1,\dots,a_7$. Therefore the remaining triangles in the boundary of $t$ must also span a disk with the same boundary.\\

	Applying \ref{triados} with $i=0$ we get two cases,

	Case $1$: Triangle $\{a_0,a_1,a_4\}$ appears in the boundary of $t$. Using \ref{triados} with $i=2$ we have at least one of $\{a_2,a_3,a_0\}$ or $\{a_2,a_3,a_6\}$ must appear. Notice that the edge $a_1a_4\in\{a_0,a_1,a_4\}$ crosses both $a_3a_0\in\{a_2,a_3,a_0\}$ and $a_3a_6\in\{a_2,a_3,a_6\}$. This is a contradiction, as the triangles would not form a disk with boundary $a_0,a_1,\dots,a_7$.\\

	Case $2$: Triangle $\{a_0,a_1,a_6\}$ appears in the boundary of $t$. Using \ref{triados} with $i=7$ we have at least one of $\{a_7,a_0,a_2\}$ or $\{a_7,a_0,a_4\}$ must appear. Notice that the edge $a_1a_6\in\{a_0,a_1,a_6\}$ crosses both $a_7a_2\in\{a_7,a_0,a_2\}$ and $a_7a_4\in\{a_7,a_0,a_4\}$. This is a contradiction, as the triangles would not form a disk with boundary $a_0,a_1,\dots,a_7$.\\

\end{proof}

We finish this section by proving the conditions of \ref{segundo} are met, thus showing the conditions for \ref{primero} are met and thus proving \ref{main}.

\begin{lem}
	Let $(V,\Delta)$ and $(V',\Delta')$ be triangulations of $3$-manifolds and $f:\Delta_3\rightarrow \Delta'_3$ an intersection preserving map. Then $F$ induces bijections between $\Delta_i$ and $\Delta'_i$ for $i\geq1$.
\end{lem}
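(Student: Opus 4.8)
The plan is to prove the statement rank by rank, reducing each case to the structural results already established. Since both complexes have dimension $3$, we have $\Delta_i = \Delta'_i = \emptyset$ for $i \geq 4$, so $F$ induces the empty bijection there, and for $i = 3$ the restriction of $F$ to $\Delta_3$ is exactly $f$, a bijection by hypothesis. Thus all the content lies in the cases $i = 2$ (triangles) and $i = 1$ (edges), which I would treat in that order; together with the two trivial cases this covers every $i \geq 1$, and in particular the range $1 \leq i \leq 3$ needed to apply \ref{segundo}.

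For triangles I would use the manifold condition that every triangle is a face of exactly two tetrahedra. If $X \in \Delta_2$ is contained in the distinct tetrahedra $Y_1, Y_2$, then $Y_1 \cap Y_2 = X$, so $F(X) = f(Y_1) \cap f(Y_2)$ has exactly $|Y_1 \cap Y_2| = 3$ elements because $f$ is intersection preserving. Being a subset of the simplex $f(Y_1)$, it is itself a simplex, hence a triangle of $\Delta'_2$. Therefore $F$ maps $\Delta_2$ into $\Delta'_2$.

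The edge case is the crux, and here I would invoke the main theorem of the preceding sections. Given $e = \{t,b\} \in \Delta_1$, the tetrahedra $H_0, \dots, H_{k-1}$ containing $e$ span $3CW_k$, since they form the neighbourhood of an edge in a $3$-manifold. By Theorem \ref{conditions} their images $f(H_0), \dots, f(H_{k-1})$ again span $3CW_k$, so they share a common edge $X'$, and consequently $F(e) = \bigcap_i f(H_i) = X'$ is an edge of $\Delta'$. This is precisely the step that consumes all of the classification work: without Theorem \ref{conditions} the images could \emph{a priori} span a different $3$-cyclic shell with the same intersection matrix (for instance $CS_6$), whose facets intersect in the empty set, so that $F(e)$ would fail to be an edge.

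It remains to upgrade these maps into bijections, which I would do symmetrically. The inverse $g = f^{-1}$ is again an intersection preserving map between triangulations of $3$-manifolds, so its associated extension $\bar F$ sends triangles to triangles and edges to edges by the identical arguments. To see $\bar F \circ F = \mathrm{id}$, note first that for a triangle $X$ with tetrahedra $Y_1, Y_2$ the two tetrahedra of $\Delta'$ containing $F(X)$ are precisely $f(Y_1)$ and $f(Y_2)$, whence $\bar F(F(X)) = Y_1 \cap Y_2 = X$; and for an edge $e$ the facets $f(H_i)$ all contain $F(e)$, so $\bar F(F(e)) \subseteq \bigcap_i f^{-1}(f(H_i)) = \bigcap_i H_i = e$, and since $\bar F(F(e))$ is an edge while $e$ has two elements the two coincide. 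The reverse composition $F \circ \bar F = \mathrm{id}$ follows identically by symmetry, so $F$ restricts to a bijection on each $\Delta_i$. The only genuine obstacle is the edge case, and it has already been absorbed into Theorem \ref{conditions}.
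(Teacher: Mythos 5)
Your proof is correct and follows essentially the same route as the paper: $i=3$ is trivial, triangles are handled via the fact that each $2$-simplex is the intersection of its exactly two containing tetrahedra, and edges are handled by recognizing the star of an edge as a copy of $3CW_k$ and invoking Theorem \ref{conditions}. The only cosmetic difference is that you establish bijectivity by explicitly running the same argument for $f^{-1}$ and checking the two compositions are the identity, whereas the paper phrases it as $f$ inducing bijections on the characterizing data (adjacent facet pairs, respectively subcomplexes isomorphic to $3CW_k$); these amount to the same thing.
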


\begin{proof}
	For $i=3$ the claim is clear because $F$ coincides with $f$. 

	For $i=2$ notice every $2$-simplex is the intersection of two facets. Because $f$ is intersection preserving we have that $f$ induces a bijection between the pairs of facets whose intersection is a $1$-simplex.

	For $i=3$ notice every $1$-simplex is the intersection of all facets containing it, these facets span a complex isomorphic to $CW_k$, on the other hand every complex isomorphic to $CW_k$ in $(V,\Delta)$ corresponds to the simplex containing the two vertices that belong to every facet. Because of \ref{conditions} we have that $f$ induces a bijection between complices isomorphic to $CW_k$, therefore $F$ induces a bijection between $\Delta_1$ and $\Delta'_1$.

\end{proof}

\end{document}